\newcommand{\mbZ}{\mathbb Z}
\newcommand{\mbC}{\mathbb C}
\newcommand{\mbR}{\mathbb R}
\newcommand{\ord}{\operatorname{ord}}
\newcommand{\oM}{\overline{\mathcal M}}
\def\cM{{\mathcal{M}}}
\def\oM{{\overline{\mathcal{M}}}}
\def\CP{{{\mathbb C}{\mathbb P}}}
\def\d{{\partial}}
\newcommand{\Coef}{\mathrm{Coef}}
\newcommand{\cR}{\mathcal{R}}
\DeclareMathOperator{\res}{{res}}
\newcommand{\oB}{\overline{B}}
\newcommand{\codim}{\operatorname{codim}}
\newcommand{\spn}{\operatorname{span}}
\newcommand{\oH}{{\overline{\mathcal{H}}}}
\newcommand{\cH}{\mathcal{H}}
\newcommand{\mbCP}{\mathbb{C}\mathbb{P}}
\newcommand{\PGL}{\mathrm{PGL}}
\newcommand{\Hur}{\mathrm{Hur}}
\newcommand{\ov}{\overline{v}}
\newcommand{\ttheta}{\widetilde{\theta}}
\newcommand{\tA}{\widetilde{A}}
\newcommand{\SL}{\mathrm{SL}}
\newtheorem{theorem}{Theorem}[section]
\newtheorem{proposition}[theorem]{Proposition}
\newtheorem{lemma}[theorem]{Lemma}
\theoremstyle{remark}
\newtheorem{remark}[theorem]{Remark}
\newtheorem{example}[theorem]{Example}
\theoremstyle{definition}
\renewcommand{\gg}[2]{\fill[color=white] (#2) circle(2mm) node {\color{black}$\substack{#1}$}; \draw (#2) circle (2mm)}
\newcommand{\lab}[4]{\draw (#1)++(#2:#3) node {$\substack{#4}$};}
\newcommand{\legm}[3]{\begin{scope}[shift={(#1)}] \draw (0:0) -- (#2:7.8mm);\fill[color=white] (#2:7.8mm) circle(1.9mm) node {\color{black}$\substack{#3}$};\end{scope}}
\newcommand{\legmrl}[3]{\begin{scope}[shift={(#1)}] \draw[decorate, decoration={zigzag, segment length=1.3mm,amplitude=0.5mm}] (0:0) -- (#2:7.8mm);\fill[color=white] (#2:7.8mm) circle(1.9mm) node {\color{black}$\substack{#3}$};\end{scope}}
\numberwithin{equation}{section}
\begin{document}

\usetikzlibrary { decorations.pathmorphing, decorations.pathreplacing, decorations.shapes, }
\tikz{\coordinate (A) at (0,0);\coordinate (B) at (0.8,0);\coordinate (C) at (1.6,0);\coordinate (D) at (2.4,0);\coordinate (A0) at (0,0.8);\coordinate (B0) at (1.6,0.8);\coordinate (B1) at (0,-0.8);\coordinate (B1a) at (-0.8,-0.8);\coordinate (C1b) at (0.8,-0.8);\coordinate (AB1) at (0.4,0.18);\coordinate (AB2) at (0.4,-0.18);\coordinate (E) at (-0.69,0.4);\coordinate (F) at (-0.69,-0.4);\coordinate (BC1) at (1.2,0.18);\coordinate (BC2) at (1.2,-0.18);\coordinate (G) at (-1.38,0);\coordinate (H) at (-2.18,0);\coordinate (I) at (-2.98,0);}

\title[Counting meromorphic differentials on $\CP^1$]{Counting meromorphic differentials on $\CP^1$}

\author{Alexandr Buryak}
\address{A. Buryak:\newline 
Faculty of Mathematics, National Research University Higher School of Economics, \newline
6 Usacheva str., Moscow, 119048, Russian Federation;\smallskip\newline 
Center for Advanced Studies, Skolkovo Institute of Science and Technology, \newline
1 Nobel str., Moscow, 143026, Russian Federation}
\email{aburyak@hse.ru}

\author{Paolo Rossi}
\address{P. Rossi:\newline Dipartimento di Matematica ``Tullio Levi-Civita'', Universit\`a degli Studi di Padova,\newline
Via Trieste 63, 35121 Padova, Italy}
\email{paolo.rossi@math.unipd.it}

\begin{abstract}
We give explicit formulas for the number of meromorphic differentials on $\mbCP^1$ with two zeros and any number of residueless poles and for the number of meromorphic differentials on~$\mbCP^1$ with one zero, two poles with unconstrained residue and any number of residueless poles, in terms of the orders of their zeros and poles. These are the only two finite families of differentials on $\mbCP^1$ with vanishing residue conditions at a subset of poles, up to the action of~$\PGL(2,\mbC)$. The first family of numbers is related to triple Hurwitz numbers by simple integration and we show its connection with the representation theory of~$\SL_2(\mbC)$ and the equations of the dispersionless KP hierarchy. The second family has a very simple generating series, and we recover it through surprisingly involved computations using intersection theory of moduli spaces of curves and differentials.
\end{abstract}

\date{\today}

\maketitle

\section{Introduction}

On the complex projective line, for any configuration of $n\geq 3$ distinct marked points and $n$ nonzero integers summing to $-2$, there exists a differential, unique up to multiplication by a nonzero complex constant, whose zeros and poles are at the $n$ marked points and their order is given by the $n$ integers.

\medskip

Up to the action of $\PGL(2,\mbC)$, which is $3$-dimensional, the space of configurations of $n$ marked points on $\mbCP^1$ has dimension $n-3$, so the number of configurations of $n$ marked points supporting a meromorphic differential with fixed orders of zeros and poles is finite if and only if $n=3$. Considering that, for $n=3$, one pole and one zero must always exist, this leaves us with two cases in which the number of meromorphic differentials is finite: two zeros and one pole or two poles and one zero.

\medskip

Each of these two cases can be enriched if we allow for extra poles of degree at least $-2$ whose residue is constrained to vanish. This way, for each new pole, we introduce a new degree of freedom (its position on $\mbCP^1$) and an extra equation (the vanishing of its residue), keeping each family zero-dimensional. We refer to differentials with two zeros and any number of residueless poles as {\it differentials of the first type}, and to differentials with one zero, two poles with unconstrained residue and any number of residueless poles as {\it differentials of the second type}. This terminology is not to be confused with the classical one for Abelian differential of the first, second, or third kind (they are actually quite incompatible).

\medskip

Meromorphic differentials of the first type are actually exact and can be integrated to meromorphic functions, so the first family of numbers is a special family of triple Hurwitz numbers for which classical Hurwitz techniques are available, as done in~\cite{CMSZ20}. Our results, beside recovering those of~\cite{CMSZ20}, relate these numbers with the representation theory of $\SL_2(\mbC)$ and the theory of integrable systems of PDEs. In particular, the numbers of differentials of the first type turn out to coincide, up to a simple factor, with the coefficients of the dispersionless Kadomtsev--Petviashvili (KP) equations. This relies on a more general result from \cite{BRZ21} involving Hodge and double ramification integrals on the space of residueless meromorphic differentials on genus $g$ curves and their relation with the coefficients of the equations of the full KP hierarchy. We also show that the generating series of the numbers of meromorphic differentials of the first type is a Dubrovin--Frobenius potential and present two explicit formulas for it. The dispersionless KP hierarchy is a reduction of the system of primary flows of the associated principal hierarchy. We thus view the constructed Dubrovin--Frobenius manifold as a natural Dubrovin--Frobenius manifold underlying the dispersionless KP hierarchy.   

\medskip

The number of differentials of the second type as a function of the order of their zeros and poles was computed in \cite[Proposition 2.3]{CC19} and in \cite[Proposition 4.6]{GT22}. It turns out that these numbers, or their generating function, are described by a remarkably simple formula, for which we give a new proof using techniques of intersection theory on the moduli space of stable curves and projectivized meromorphic differentials from \cite{BCGGM18,BCGGM19,CMZ20}. In particular, our proof is based on Lemma \ref{lemma:WDVV}, a recursive relation between numbers of meromorphic differentials of the first and second type of independent interest, lifted from the WDVV relations in the cohomology of the moduli spaces of rational stable curves.

\medskip

\subsection*{Acknowledgements}

We would like to thank Renzo Cavalieri and Takashi Takebe for useful discussions and Q. Gendron and J. Schmitt for providing us with useful references.

\medskip

The work of A.~B. is an output of a research project implemented as part of the Basic Research Program at the National Research University Higher School of Economics (HSE University). P.~R.~was partially supported by the BIRD-SID-2021 UniPD grant and is affiliated to GNSAGA-INDAM.

\medskip

\section{Main definitions}

With the same notations used in~\cite[Section~1]{BRZ21}, for $A=(a_1,\ldots,a_k) \in \mbZ^k$ and $B=(b_1,\ldots,b_n) \in \mbZ_{\le -2}^n$ satisfying $\sum a_i+\sum b_j=2g-2$, let $\cH_g(A;B)$ be the locus in~$\cM_{g,k+n}$ whose points correspond to genus $g$ smooth curves with marked points $z_1,\ldots,z_{k+n}$ such that there exists a meromorphic differential whose divisor of zeros and poles is $\sum_{i=1}^k a_i[z_i] + \sum_{j=1}^{n} b_j[z_{k+j}]$ and such that, for $k+1\le j \le k+n$, its residue at the poles~$z_j$ vanishes. $\cH_g(A;B)$ is a closed substack of $\cM_{g,k+n}$ of dimension 
\begin{gather}\label{eq:dimension of oH}
\dim\cH_g(A;B)=
\begin{cases}
2g-3+k,&\text{if $a_i<0$ for some $1\le i\le k$},\\
2g-2+k,&\text{otherwise}.
\end{cases}
\end{gather}
We denote by $\oH_g(A;B)$ the closure of $\cH_g(A;B)$ in~$\oM_{g,k+n}$.

\medskip

Suppose that $a_i=0$ for some $i$, say $i=1$. Then among the spaces $\oH_g(A;B)$ only the following ones are finite and non-empty: $\cH_0(0,a;-a-2)$, $a\ge 0$, and $\cH_0(0,-1,-1)$; and they are all just points.

\medskip

Suppose that $a_i\ne 0$ for all $1\le i\le k$. From formula~\eqref{eq:dimension of oH} it is easy to see that $\cH_g(A;B)$ can be non-empty and finite only in the case $g=0$ and if one of the following two conditions are satisfied:
\begin{enumerate}
\item $k=2$ and $a_1,a_2\ge 1$;

\smallskip

\item $k=3$ and exactly one of the three numbers $a_1,a_2,a_3$ is positive.
\end{enumerate}
Formally, there is also the case when $k=3$ and exactly one of the three numbers $a_1,a_2,a_3$ is negative, but this is obviously reduced to the first case. We see that these two cases correspond to differentials of the first and second type, respectively. So our goal is to compute the following two families of numbers:
\begin{align*}
&|\cH_0(a,b;-c_1,\ldots,-c_n)|,     && a,b\ge 1,\quad c_i\ge 2,\quad a+b-\sum c_i=-2,\\
&|\cH_0(a,-b,-c;-d_1,\ldots,-d_n)|, && a,b,c\ge 1,\quad d_i\ge 2,\quad a-b-c-\sum d_i=-2.
\end{align*}

\medskip

The space $\cH_0(A;B)$ can be described very explicitly. Indeed, given a $(k+n)$-tuple $Z=(z_1,\ldots,z_{k+n})$ of pairwise distinct points on $\CP^1$, there exists a unique, up to multiplication by a nonzero complex constant, meromophic differential~$\omega_{Z,A,B}$ on $\CP^1$ such that $(\omega_{Z,A,B})=\sum_{i=1}^k a_i[z_i]+\sum_{j=1}^n b_j[z_{j+k}]$. It is given by 
$$
\omega_{Z,A,B}=\prod_{i=1}^k(z-z_i)^{a_i}\prod_{j=1}^n (z-z_{j+k})^{b_j}dz,
$$
where, if $z_l=\infty$ for some $l$, then we assume that the corresponding factor is equal to $1$. Then  
\begin{gather*}
\cH_0(A;B)\cong\left.\left\{Z=(z_1,\ldots,z_{k+n})\in(\CP^1)^{k+n}\left|\begin{minipage}{5cm}\small $z_i\ne z_j$ and $\res_{z=z_l}\omega_{Z,A,B}=0$\\for $k+1\le l\le k+n$\end{minipage}\right.\right\}\right/{\PGL(2,\mbC)}.
\end{gather*}

\medskip

\begin{example}\label{example1}
Let us compute the number $|\cH_0(1,1;-2,-2)|$. A meromorphic differential~$\omega$ with $(\omega)=[1]+[t]-2[0]-2[\infty]$, $t\ne 0,1,\infty$, is given by
$$
\omega=\frac{(z-1)(z-t)}{z^2}dz,
$$
and we have $\res_{z=0}\omega=-1-t$, which is zero only for $t=-1$. Thus, $|\cH_0(1,1;-2,-2)|=1$.
\end{example}

\medskip

\begin{example}\label{example2}
Let us compute the number $|\cH_0(2,2;-3,-3)|$. A meromorphic differential~$\omega$ with $(\omega)=2[1]+2[t]-3[0]-3[\infty]$, $t\ne 0,1,\infty$, is given by
$$
\omega=\frac{(z-1)^2(z-t)^2}{z^3}dz,
$$
and we have $\res_{z=0}\omega=1+4t+t^2$. This quadratic polynomial has exactly two roots and, therefore, $|\cH_0(2,2;-3,-3)|=2$.
\end{example}

\medskip

\begin{example}\label{example3}
Let us compute the number $|\cH_0(2,2;-2,-2,-2)|$. A meromorphic differential~$\omega$ with $(\omega)=2[x]+2[y]-2[0]-2[1]-2[\infty]$, $x,y\ne 0,1,\infty$, $x\ne y$, is given by
$$
\omega=\frac{(z-x)^2(z-y)^2}{z^2(z-1)^2}dz,
$$
and we have 
$$
\res_{z=0}\omega=2xy(-x-y+xy),\qquad \res_{z=1}\omega=-2(-1+x+y-x^2y-xy^2+x^2y^2).
$$ 
So we have to find solutions of the system
$$
-x-y+xy=0=-1 + x + y - x^2 y - x y^2 + x^2 y^2,\quad x,y\ne 0,1,\quad x\ne y.
$$
From the first equation we obtain $x=\frac{y}{y-1}$, and substituting this in the second equation we get $\frac{y^2-y+1}{y-1}=0$, which has exactly two solutions. Thus, $|\cH_0(2,2;-2,-2,-2)|=2$.
\end{example}

\medskip

\begin{example}\label{example4}
Let us compute the number $|\cH_0(a+b-1,-1,-a;-b)|$, $a\ge 1$, $b\ge 2$. A meromorphic differential~$\omega$ with $(\omega)=(a+b-1)[t]-[1]-b[0]-a[\infty]$, $t\ne 0,1,\infty$, is given by
$$
\omega=\frac{(t-z)^{a+b-1}}{z^b(1-z)}dz,
$$
and we have 
$$
\res_{z=0}\omega=t^a\sum_{k=0}^{b-1}(-1)^k{a+b-1\choose k}t^{b-1-k}=:P(t).
$$
The ratio $\frac{P(t)}{t^a}$ is a polynomial of degree $b-1$, which doesn't vanish at $t=0$. Also $P(1)=(-1)^{b-1}{a+b-2\choose b-1}\ne 0$. From the elementary identity 
$$
P(t)-\frac{t-1}{a+b-1}P'(t)=(-1)^b{a+b-1\choose b}t^{a-1}
$$
we conclude that the polynomial $\frac{P(t)}{t^a}$ doesn't have multiple roots. Thus, $|\cH_0(a+b-1,-1,-a;-b)|=b-1$.
\end{example}

\medskip

%%%%%%%%%%%%%%%%%%%%%%%%%%%%%%%%%%%%%%%%%%%%%%%%%%%%%%%%%%
%%%%%%%%%%%%%%%%%%%%%%%%%%%%%%%%%%%%%%%%%%%%%%%%%%%%%%%%%%

\section{Differentials of the first type}

\subsection{Hurwitz numbers}

Let $\ov_1,\ldots,\ov_r$ be tuples of positive integers and $d\ge 1$. We denote~by 
$$
\Hur_d(\ov_1,\ldots,\ov_r)
$$
the number of ramified coverings $f\colon C\to\CP^1$, where $\deg f=d$, $C$ is a compact connected smooth algebraic curve, $f$ is ramified over $r$ fixed branch points in $\mbCP^1$, the ramification profile over the $i$-th branch point is given by the parts of~$\ov_i$ together with the necessary number of units, and the ramified covering $f$ is taken with the weight that is equal to the inverse of the order of the automorphism group of the covering, where we assume that an automorphism fixes the points corresponding to the parts of $\ov_i$. 

\medskip

We will be interested in the numbers $\Hur_{\sum c_i-n}((a+1),(b+1),(c_1-1,\ldots,c_n-1))$, where $a,b\ge 1$, $c_1,\ldots,c_n\ge 2$ and the condition $a+b-\sum c_i=-2$ is satisfied. Then the Riemann--Hurwitz formula says that for a corresponding ramified covering $f\colon C\to\CP^1$ we have $g(C)=0$.

\medskip

\subsection{The dispersionless KP hierarchy}\label{section:dKP}

Let $p$ and $f_i^{(j)}$, $i\ge 1$, $j\ge 0$, be formal variables, and consider the ring of polynomials $\cR_f:=\mbC\left[f_i^{(j)}\right]_{i\ge 1,\,j\ge 0}$. We also denote $f_i:=f_i^{(0)}$. Introduce a linear operator $\d_x\colon\cR_f\to\cR_f$ by $\d_x:=\sum_{i\ge 1,\,j\ge 0}f_i^{(j+1)}\frac{\d}{\d f_i^{(j)}}$. Let us endow the ring $\cR_f[p,p^{-1}]]$ with a Poisson structure by
$$
\{A,B\}:=\d_p A\cdot\d_x B-\d_p B\cdot\d_x A,\quad A,B\in\cR_f[p,p^{-1}]],
$$
where $\d_p:=\frac{\d}{\d p}$. For an element $A=\sum_{i\le m}a_i p^i\in\cR_f[p,p^{-1}]]$, denote $A_+:=\sum_{i=0}^m a_i p^i$.

\medskip

Let
\begin{gather}\label{eq:lambda(p)}
\lambda(p):=p+\sum_{i\ge 1}f_i p^{-i}\in\cR_f[p,p^{-1}]].
\end{gather}
The \emph{dispersionless KP hierarchy} is a system of evolutionary PDEs with dependent variables~$f_i$, $i\ge 1$, spatial variable $x$, and times $T_n$, $n\ge 1$, given by
$$
\frac{\d\lambda(p)}{\d T_n}=\left\{\left(\lambda(p)^n\right)_+,\lambda(p)\right\},\quad n\ge 1.
$$
For example, $\frac{\d f_i}{\d T_1}=f_i^{(1)}$ and 
$$
\frac{\d f_1}{\d T_2}=2 f_2^{(1)},\qquad \frac{\d f_2}{\d T_2}=2 f_3^{(1)}+2f_1f_1^{(1)}.
$$

\medskip

Define a change of variables $f_i\mapsto w_i(f_1,f_2,\ldots)$ by
$$
w_i(f_1,f_2,\ldots):=\res_{p=0}\lambda(p)^i,\quad i\ge 1.
$$
For example, we have
$$
w_1=f_1,\quad w_2=2f_2,\quad w_3=3f_3+3f_1^2,\quad w_4=4f_4+12f_1 f_2,\quad w_5=5f_5+20f_1f_3+10f_2^2+10f_1^3.
$$
Since 
\begin{multline*}
\frac{\d w_i}{\d T_j}=\res_{p=0}\left\{\lambda(p)^j_+,\lambda(p)^i\right\}=\res_{p=0}\left(\d_p\lambda(p)^j_+\d_x\lambda(p)^i-\d_x\lambda(p)^j_+\d_p\lambda(p)^i\right)=\\
=\res_{p=0}\left(-\lambda(p)^j_+\d_x\d_p\lambda(p)^i-\d_x\lambda(p)^j_+\d_p\lambda(p)^i\right)=-\d_x\res_{p=0}\left(\lambda(p)^j_+\d_p\lambda(p)^i\right),
\end{multline*}
we obtain that the dispersionless KP hierarchy written in the variables $w_i$ has the form
$$
\frac{\d w_i}{\d T_j}=\d_x R_{i,j},
$$
where $R_{i,j}$ are polynomials in $w_1,w_2,\ldots$, which can be found using the well-known formula (see e.g.~\cite[equations~(5.2.11)]{Tak14})
$$
R_{i,j}=-\res_{p=0}\left(\lambda(p)^j_+\d_p\lambda(p)^i\right).
$$
They satisfy the properties $R_{i,j}=R_{j,i}$ and $R_{i,1}=R_{1,i}=w_i$. The first nontrivial polynomials~are
$$
R_{2,2}=\frac{4}{3}w_3-2w_1^2,\qquad R_{2,3}=\frac{3}{2}w_4-3w_1w_2,\qquad R_{3,3}=\frac{9}{5}w_5-3w_1w_3-\frac{9}{4}w_2^2+3w_1^3.
$$

\medskip

It is easy to see that the coefficients of the inverse power series of~\eqref{eq:lambda(p)},
$$
p(\lambda)=\lambda+\sum_{\alpha\ge 1} t^{-\alpha-1}\lambda^{\alpha},
$$
are given by
$$
t^{-\alpha-1} = -\frac{w_{\alpha}}{\alpha}.
$$
The following result is well known, see e.g.~\cite[equation~(6.2.3)]{Tak14}, but for completeness we will give a short proof of it.

\medskip

\begin{lemma}
We have
\begin{gather}\label{eq:generating function for R}
\sum_{p,q\ge 1}\frac{1}{pq}R_{p,q}z^p\zeta^q=\log\left(1-\sum_{i\ge 1}\frac{1}{i}\frac{z^{i}-\zeta^{i}}{z^{-1}-\zeta^{-1}}w_i\right).
\end{gather}
\end{lemma}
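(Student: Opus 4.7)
The plan is to recognize that the right-hand side of~\eqref{eq:generating function for R} equals $\log\frac{p(\nu)-p(\mu)}{\nu-\mu}$, where $p(\lambda)=\lambda-\sum_{\alpha\ge 1}w_\alpha\lambda^{-\alpha}/\alpha$ is the compositional inverse of $\lambda(p)$ and $\nu:=1/z$, $\mu:=1/\zeta$ (this follows from a direct expansion of $p$), and then to match with the LHS via the classical Faber polynomial identity. Renaming the formal variable in $\lambda(p)$ to $u$ (to avoid collision with the summation index $p$ in $R_{p,q}$), the main tool is
$$
\sum_{m\ge 1}\frac{\lambda(u)^m_+}{m}z^m=\log\frac{\nu}{p(\nu)-u},
$$
where $\lambda(u)^m_+$ denotes the polynomial-in-$u$ part of the Laurent series $\lambda(u)^m$. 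I would prove this by differentiating in $z$ and identifying the derivative with $\sum_{n\ge 0}\lambda(u)^n_+\nu^{-n-1}=p'(\nu)/(p(\nu)-u)$, the last equality via coefficient comparison in $\nu^{-1}$.

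Using the symmetry $R_{p,q}=R_{q,p}=-\res_{u=0}(\lambda(u)^p_+\,d\lambda(u)^q)$ together with the Faber identity applied to the summation over $p$, I obtain
$$
S(z,\zeta):=\sum_{p,q\ge 1}\frac{R_{p,q}}{pq}z^p\zeta^q=\res_{u=0}\left(\log\frac{\nu}{p(\nu)-u}\cdot d\log(1-\zeta\lambda(u))\right).
$$
Formal integration by parts in $u$ (using $\res_{u=0}\,d(FG)=0$) together with $\d_u\log\frac{\nu}{p(\nu)-u}=1/(p(\nu)-u)$ converts this into
$$
S(z,\zeta)=-\res_{u=0}\left(\frac{\log(1-\zeta\lambda(u))}{p(\nu)-u}\,du\right).
$$

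To conclude, I would expand $1/(p(\nu)-u)=\sum_{k\ge 0}u^k/p(\nu)^{k+1}$ -- the correct formal expansion, since $p(\nu)=\nu+O(\nu^{-1})$ plays the role of ``large'' parameter and the whole expression lives in $\cR_f[[z,\zeta]]$ -- so that the residue reduces to the value of $[\log(1-\zeta\lambda(u))]_-$ at $u=p(\nu)$. A second application of the Faber identity gives $[\log(1-\zeta\lambda(u))]_-=\log\frac{\mu-\lambda(u)}{p(\mu)-u}$, and evaluating at $u=p(\nu)$ (where $\lambda(p(\nu))=\nu$) gives $\log\frac{\mu-\nu}{p(\mu)-p(\nu)}$. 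Collecting signs produces $S(z,\zeta)=\log\frac{p(\nu)-p(\mu)}{\nu-\mu}$, completing the proof. The main obstacle is choosing the correct formal expansion of $1/(p(\nu)-u)$ and carefully tracking the $[\cdot]_\pm$ projections in $u$ through the two applications of the Faber identity and the intermediate integration by parts.
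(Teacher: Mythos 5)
Your argument is correct, and it reaches the identity by a route that is related to but organized differently from the paper's. Both proofs begin from the same observation, namely that the right-hand side of~\eqref{eq:generating function for R} is the canonical expansion of $\log\frac{p(\nu)-p(\mu)}{\nu-\mu}$ with $\nu=z^{-1}$, $\mu=\zeta^{-1}$ (your form $p(\lambda)=\lambda-\sum_{\alpha\ge1}\frac{w_\alpha}{\alpha}\lambda^{-\alpha}$ is the correct one; the exponent $\lambda^{\alpha}$ in the paper is a typo), and both run on formal residue calculus for the mutually inverse series $\lambda(p)$ and $p(\lambda)$, with the same kind of care about which expansion region each logarithm lives in. The difference is in the decomposition: the paper extracts $\Coef_{z^p\zeta^q}$ of the right-hand side as a double residue in $\lambda_1,\lambda_2$, splits off the piece $\log\frac{p(\lambda_2)}{\lambda_2-\lambda_1}$ (regular in $\lambda_1$), and performs the two-variable substitution $\lambda_i=\lambda(p_i)$ to land directly on the defining formula $R_{p,q}=-\res_{p=0}\bigl(\lambda(p)^q_+\d_p\lambda(p)^p\bigr)$; it never isolates a one-variable identity. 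You instead resum the left-hand side through the classical Faber-polynomial generating identity $\sum_{m\ge1}\frac{1}{m}\lambda(u)^m_+z^m=\log\frac{\nu}{p(\nu)-u}$, used twice (once to build the $z$-sum, once to identify $[\log(1-\zeta\lambda(u))]_-$), together with an integration by parts and the evaluation $u=p(\nu)$. What your route buys is the explicit appearance of the Faber identity, a reusable statement close to the standard dispersionless-KP literature (this is essentially how the identity is derived in Takebe's notes cited by the paper), and a proof that proceeds from the $R_{p,q}$ side rather than from the closed-form side; what it costs is more bookkeeping, since the expansion of $\frac{1}{p(\nu)-u}$ as $\sum_{k\ge0}u^k p(\nu)^{-k-1}$ and the legitimacy of substituting $u=p(\nu)$ into the negative-$u$-power part must be justified (both are forced by the Faber identity itself and are $z$-adically convergent, so this is the same level of care as the paper's ``regular as $\lambda_1\to0$'' step). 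The paper's symmetric two-variable manipulation is shorter; your version is longer but makes the classical structure behind the formula visible, and all signs and projections in your chain check out.
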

\begin{proof}
First of all, for $\alpha,\beta\ge 0$, we note that
\begin{gather*}
\Coef_{\lambda_1^{-\alpha-1}\lambda_2^{-\beta-1}}\log\left(1-\sum_{i\ge 1}\frac{1}{i}\frac{\lambda_1^{-i}-\lambda_2^{-i}}{\lambda_1-\lambda_2}w_i\right)=\res_{\lambda_2=0}\res_{\lambda_1=0}\left[\lambda_1^{\alpha}\lambda_2^{\beta} \log \left(\frac{p(\lambda_2)-p(\lambda_1)}{\lambda_2-\lambda_1}\right)\right]
\end{gather*}
and then transform the right-hand side as follows:
\begin{align*}
&\res_{\lambda_2=0}\res_{\lambda_1=0}\left[\lambda_1^{\alpha}\lambda_2^{\beta} \left(\log \left(\frac{p(\lambda_2)-p(\lambda_1)}{p(\lambda_2)}\right)+\cancel{ \log\left(\frac{p(\lambda_2)}{\lambda_2-\lambda_1}\right)}\right)\right] =\\
=& \res_{p_2=0}\res_{p_1=0}\left[\frac{\d_{p_1}\lambda(p_1)^{\alpha+1}}{\alpha+1}\frac{\d_{p_2}\lambda(p_2)^{\beta+1}}{\beta+1} \log \left(\frac{p_2-p_1}{p_2}\right) \right] =\\
=& -\frac{\res_{p=0}\left(\lambda(p)^{\beta+1}_+\d_p\lambda(p)^{\alpha+1}\right)}{(\alpha+1)(\beta+1)}=\\
=&\frac{R_{\alpha+1,\beta+1}}{(\alpha+1)(\beta+1)},
\end{align*}
where the extra term in the first line can be eliminated since it is regular as $\lambda_1\to 0$.
\end{proof}

\medskip

\subsection{The representation theory of $\SL_2(\mbC)$}

Denote by $\rho_1$ the fundamental representation of the group $\SL_2=\SL_2(\mbC)$ and by $\rho_k$ its $k$-th symmetric power. The complete list of finite-dimensional irreducible representations of $\SL_2$ is $\{\rho_k\}_{k\ge 0}$. The tensor product $\rho_k\otimes\rho_l$ is decomposed in the sum of irreducible representations as follows:
$$
\rho_k\otimes\rho_l=\rho_{|k-l|}\oplus\rho_{|k-l|+2}\oplus\ldots\oplus\rho_{k+l}.
$$
We embed the group $\mbC^*$ into $\SL_2$ by
$$
\mbC^*\ni\lambda\mapsto
\begin{pmatrix}
\lambda & 0 \\
0 & \lambda^{-1}
\end{pmatrix}\in\SL_2.
$$
For a finite-dimensional $\SL_2$-representation $V$ and $d\in\mbZ$, we denote by $V_{[d]}\subset V$ the subspace of vectors having weight~$d$ with respect to the $\mbC^*$-action. So we have $V=\bigoplus_{d\in\mbZ}V_{[d]}$. 
%For $d\ge 0$, we introduce the following infinite dimensional representation:
%$$
%V_d:=\bigoplus_{i\ge 0}\rho_{d+2i}.
%$$

\medskip

\subsection{The main result}

\begin{theorem}\label{theorem:first}
Let $a,b\ge 0$, $n\ge 1$, and $c_1,\ldots,c_n\ge 2$ be fixed integers satisfying the condition $a+b-\sum c_i=-2$. Then all the following numbers are equal:
\begin{enumerate}
\item $\displaystyle|\cH_0(a,b;-c_1,\ldots,-c_n)|$;

\smallskip

\item $\displaystyle\Hur_{\sum c_i-n}((a+1),(b+1),(c_1-1,\ldots,c_n-1))$;

\smallskip

\item $\displaystyle(-1)^{n+1}\frac{(c_1-1)\ldots(c_n-1)}{(a+1)(b+1)}\left.\frac{\d^n R_{a+1,b+1}}{\d w_{c_1-1}\ldots\d w_{c_n-1}}\right|_{w_*=0}$;

\smallskip

\item $\displaystyle(n-1)!\dim\left(\otimes_{i=1}^n\rho_{c_i-2}\right)_{[a-b]}$;

\smallskip

\item $\displaystyle(n-1)!\Coef_{t^{a+1}}\prod_{i=1}^n\frac{t-t^{c_i}}{1-t}$.
\end{enumerate}
\end{theorem}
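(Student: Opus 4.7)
My plan is to close the chain of equivalences $(1)\Leftrightarrow(2)\Leftrightarrow(3)\Leftrightarrow(5)\Leftrightarrow(4)$. The genuinely difficult link is $(2)\Leftrightarrow(3)$, for which I would invoke the main theorem of \cite{BRZ21}; the other three equivalences are essentially mechanical.

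For $(1)\Leftrightarrow(2)$ I integrate. Because $\omega$ has vanishing residues at all of its poles on $\mbCP^1$, it is exact: $\omega=df$ for a meromorphic function $f\colon\mbCP^1\to\mbCP^1$, unique up to an additive constant. The two zeros of $\omega$ of orders $a,b$ become critical points of $f$ of ramification indices $a+1, b+1$, while the residueless poles of orders $c_j$ become poles of $f$ of orders $c_j-1$. Hence $f$ is a degree-$d$ ramified covering with $d=a+b+2-n$, ramified over three points with profiles $(a+1)$, $(b+1)$, $(c_1-1,\ldots,c_n-1)$. Combining the $\mbC^*$-rescaling of $\omega$ with the additive freedom of $f$ (i.e. the affine action on the target) fixes the three branch points, while the $\PGL(2,\mbC)$ action on the source matches on both sides. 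With $n+2\ge 3$ labelled marked preimages, the automorphism group of a generic cover is trivial, so both sides are counted with weight~$1$.

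For $(3)\Leftrightarrow(5)$ I apply $\partial^n/\partial w_{c_1-1}\cdots\partial w_{c_n-1}|_{w_*=0}$ to both sides of \eqref{eq:generating function for R}. Since the argument $X$ of $\log(1-X)$ is linear in the $w_i$, the $n$-fold derivative at $w_*=0$ equals $-(n-1)!\prod_j\frac{1}{c_j-1}\cdot\frac{z^{c_j-1}-\zeta^{c_j-1}}{z^{-1}-\zeta^{-1}}$. Expanding each factor as $-\sum_{m=1}^{c_j-1}z^{c_j-m}\zeta^m$ and extracting the $z^{a+1}\zeta^{b+1}$-coefficient (where the constraints $\sum m_j=b+1$ and $\sum(c_j-m_j)=a+1$ coincide by $\sum c_j=a+b+2$) yields $\Coef_{t^{a+1}}\prod_j(t-t^{c_j})/(1-t)$ times $(-1)^n(n-1)!/\prod(c_j-1)$; absorbing the $1/((a+1)(b+1))$ from the LHS coefficient of $z^{a+1}\zeta^{b+1}$ into the prefactor of (3) closes the identity. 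For $(5)\Leftrightarrow(4)$ I use that $\chi_{\rho_k}(\lambda)=\lambda^{-k}+\lambda^{-k+2}+\cdots+\lambda^k$, so $\dim(\bigotimes_{i=1}^n\rho_{c_i-2})_{[a-b]}=[\lambda^{a-b}]\prod_i\chi_{\rho_{c_i-2}}(\lambda)$; factoring $\lambda^{-(c_i-2)}$ out of each character and substituting $\mu=\lambda^2$ converts this (via $\sum c_i=a+b+2$) into the number of tuples $(e_1,\ldots,e_n)$ with $1\le e_i\le c_i-1$ and $\sum e_i=a+1$, equivalently $(5)/(n-1)!$.

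The main obstacle is $(2)\Leftrightarrow(3)$. I would derive it as the genus-zero specialisation of the main theorem of \cite{BRZ21}, where Hodge and double ramification intersection numbers on the moduli space $\oH_g$ of residueless meromorphic differentials are identified with the coefficients of the full KP hierarchy: in genus zero $\oH_0$ collapses to the finite set $\cH_0$, the Hodge and DR classes contribute trivially, and the full KP hierarchy degenerates to the dispersionless KP hierarchy of Section~\ref{section:dKP}. The delicate point is verifying that this specialisation recovers the precise normalisation $(-1)^{n+1}\prod(c_j-1)/((a+1)(b+1))$ appearing in (3), which is where the bulk of the technical work lies.
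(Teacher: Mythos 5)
Your proposal is correct and follows essentially the same route as the paper: integration of the residueless (hence exact) differential for (1)=(2), the citation of \cite{BRZ21} for the link to the dispersionless KP coefficients, the $n$-fold derivative of identity~\eqref{eq:generating function for R} for (3)=(5), and the character computation for (4)=(5). The only cosmetic difference is that the paper invokes \cite[Theorem~3.5]{BRZ21} directly as the equality (1)=(3) in genus $0$ (which is exactly what your description of that step amounts to, since it concerns $|\cH_0(a,b;-c_1,\ldots,-c_n)|$ rather than the Hurwitz number), so the normalisation in (3) comes straight from that theorem and requires no additional technical work.
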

\begin{proof}
To prove the equation (1)=(2), note that any residueless meromorphic differential~$\omega$ on~$\mbCP^1$ is exact and can be integrated to a meromorphic function~$f$. Moreover, if 
\begin{gather}\label{eq:divisor of omega,a-b case}
(\omega)=a[z_1]+b[z_2]-\sum_{i=1}^n c_i[z_{i+2}],
\end{gather}
where $z_1,\ldots,z_{n+2}\in\mbCP^1$ are pairwise distinct, then the only critical points of $f$ are the points $z_1,\ldots,z_{n+2}$ with multiplicities $a+1,b+1,c_1-1,\ldots,c_n-1$, respectively. Moreover, the set $\{z_3,\ldots,z_{n+2}\}$ is the set of poles of $f$, and the Riemann--Hurwitz formula implies that $f(z_1)\ne f(z_2)$. Since $\omega$ satisfying~\eqref{eq:divisor of omega,a-b case} is determined by the tuple $Z=(z_1,\ldots,z_{n+2})$ uniquely up to multiplication by a nonzero complex constant, a function $f$ satisfying $df=\omega$ is determined by~$Z$ uniquely up to the transformation $f\mapsto\alpha f+\beta$, $\alpha\in\mbC^*$, $\beta\in\mbC$. Let us fix the choice of $\alpha$ and $\beta$ by requiring that $f(z_1)=0$ and $f(z_2)=1$. This proves that the correspondence $f\mapsto df$ gives a bijection between the set of isomorphism classes of ramified coverings giving the number $\Hur_{\sum c_i-n}((a+1),(b+1),(c_1-1,\ldots,c_n-1))$ and the set $\cH_0(a,b;-c_1,\ldots,-c_n)$. Therefore, the equation (1)=(2) is proved.

\medskip

The equation (1)=(3) is the special case of~\cite[Theorem 3.5]{BRZ21} in genus~$0$. 

\medskip

Let us prove that (3)=(5). We have to prove that
$$
\frac{1}{pq}\left.\frac{\d^n R_{p,q}}{\d w_{l_1}\ldots\d w_{l_n}}\right|_{w_*=0}=-\frac{(n-1)!}{\prod_{i=1}^n l_i}\Coef_{z^p}\prod_{i=1}^n\frac{z^{l_i+1}-z}{1-z},\quad \begin{minipage}{4.5cm}$n,p,q,l_1,\ldots,l_n\ge 1$,\\$p+q=n+\sum l_i$.\end{minipage}
$$
Note that if we assign to $w_i$ degree $i+1$, then the polynomials~$R_{p,q}$ become homogeneous with $\deg R_{p,q}=p+q$. Transforming also $\prod_{i=1}^n\frac{z^{l_i+1}-z}{1-z}=\prod_{i=1}^n\frac{z^{l_i}-1}{z^{-1}-1}$, we see that the desired equation is equivalent to
$$
\frac{1}{pq}\left.\frac{\d^n R_{p,q}}{\d w_{l_1}\ldots\d w_{l_n}}\right|_{w_*=0}=-\frac{(n-1)!}{\prod_{i=1}^n l_i}\Coef_{z^p\zeta^q}\prod_{i=1}^n\frac{z^{l_i}-\zeta^{l_i}}{z^{-1}-\zeta^{-1}},\quad n,p,q,l_1,\ldots,l_n\ge 1,
$$
or, using the generating series, to
$$
\sum_{p,q\ge 1}\frac{1}{pq}\left.\frac{\d^n R_{p,q}}{\d w_{l_1}\ldots\d w_{l_n}}\right|_{w_*=0}z^p\zeta^q=-\frac{(n-1)!}{\prod_{i=1}^n l_i}\prod_{i=1}^n\frac{z^{l_i}-\zeta^{l_i}}{z^{-1}-\zeta^{-1}}.
$$
This can be further equivalently transformed as
$$
\left.\frac{\d^n}{\d w_{l_1}\ldots\d w_{l_n}}\left(\sum_{p,q\ge 1}\frac{1}{pq}R_{p,q}z^p\zeta^q\right)\right|_{w_*=0}=\left.\frac{\d^n}{\d w_{l_1}\ldots\d w_{l_n}}\log\left(1-\sum_{i\ge 1}\frac{1}{i}\frac{z^{i}-\zeta^{i}}{z^{-1}-\zeta^{-1}}w_i\right)\right|_{w_*=0},
$$
and therefore is equivalent to identity~\eqref{eq:generating function for R}.

\medskip

It remains to prove that (4)=(5). For a finite-dimensional representation~$V$ of $\SL_2$ denote 
$$
\chi_V(q):=\sum_{d\in\mbZ}q^d\dim V_{[d]}\in\mbZ[q,q^{-1}].
$$
Note that $\chi_{\rho_i}(q)=\frac{q^{i+1}-q^{-i-1}}{q-q^{-1}}$ and for two finite-dimensional $\SL_2$-representations $V$ and $W$ we have $\chi_{V\otimes W}(q)=\chi_V(q)\chi_W(q)$. So we compute
\begin{multline*}
\Coef_{t^{a+1}}\prod_{i=1}^n\frac{t^{c_i}-t}{t-1}=\Coef_{t^{a+1}}t^{\frac{\sum c_i}{2}}\prod_{i=1}^n\frac{t^{\frac{c_i-1}{2}}-t^{-\frac{c_i-1}{2}}}{t^{\frac{1}{2}}-t^{-\frac{1}{2}}}=\Coef_{t^{a+1}}t^{\frac{a+b+2}{2}}\prod_{i=1}^n\frac{t^{\frac{c_i-1}{2}}-t^{-\frac{c_i-1}{2}}}{t^{\frac{1}{2}}-t^{-\frac{1}{2}}}=\\
=\Coef_{t^{\frac{a-b}{2}}}\prod_{i=1}^n\frac{t^{\frac{c_i-1}{2}}-t^{-\frac{c_i-1}{2}}}{t^{\frac{1}{2}}-t^{-\frac{1}{2}}}\xlongequal{q=t^{\frac{1}{2}}}\Coef_{q^{a-b}}\prod_{i=1}^n\frac{q^{c_i-1}-q^{-c_i+1}}{q-q^{-1}}=\dim\left(\otimes_{i=1}^n\rho_{c_i-2}\right)_{[a-b]},
\end{multline*}
as required.
\end{proof}

\begin{remark}
The equality (2)=(5) was proved in~\cite[Proposition~2.1]{CMSZ20} using other methods.
\end{remark}

\begin{example}\label{example:n2-residue}
For $n=2$, the theorem gives
$$
|\cH_0(a,b;-c_1,-c_2)|=\min(a,b,c_1-1,c_2-1),\quad a,b\ge 0,\,c_1,c_2\ge 2,\,c_1+c_2=a+b+2.
$$
\end{example}

\medskip

\subsection{Dubrovin--Frobenius potential}

Consider the following formal power series in formal variables $t^\alpha$, $\alpha\in \mbZ^\star:=\mbZ\backslash\{-1\}$, collecting all the numbers $|\cH_0(a,b;-c_1,\ldots,-c_n)|$, $a,b\ge 0$, $c_1,\ldots,c_n\ge 2$, $a+b-\sum c_i=-2$, described above:
\begin{equation}\label{eq:Frobenius potential}
F(t^*):=\sum_{n\ge 1} \sum_{\substack{a,b\ge 0\\ c_1,\ldots,c_n\ge 2\\ a+b-\sum c_i=-2}} |\cH_0(a,b;-c_1,\ldots,-c_n)|\frac{t^a t^b}{2}\frac{t^{-c_1} \ldots t^{-c_n}}{n!}.
\end{equation}
Here and in what follows we use the subscript or superscript $*$ to denote all possible values of the corresponding index and we adhere to Einstein's convention of sum over repeated upper and lower indices.

\medskip

\begin{proposition}
Define the constant infinite matrix $\eta^{\alpha \beta}=\eta_{\alpha\beta}:= \delta_{\alpha+\beta,-2}$, $\alpha,\beta \in \mbZ^\star$ and the differential operators $E:=\sum_{\alpha\geq 0} t^\alpha\frac{\d}{\d t^\alpha}$ and $\widetilde{E}:=\sum_{\alpha\in \mbZ^\star} \alpha t^\alpha \frac{\d}{\d t^\alpha}$. Then the generating series~\eqref{eq:Frobenius potential} satisfies the following system of equations
\begin{align}
&\frac{\d^3 F}{\d t^\alpha \d t^\beta \d t^\mu} \eta^{\mu \nu} \frac{\d^3 F}{ \d t^\nu \d t^\gamma \d t^\delta}=\frac{\d^3 F}{\d t^\alpha \d t^\gamma \d t^\mu} \eta^{\mu \nu} \frac{\d^3 F}{ \d t^\nu \d t^\beta \d t^\delta}, && \alpha, \beta, \gamma, \delta \in \mbZ^\star,\label{eq:WDVVequations1}\\
&\frac{\d^3 F}{\d t^0 \d t^\alpha \d t^\beta}=\eta_{\alpha\beta}, && \alpha, \beta \in \mbZ^\star,\label{eq:WDVVequations2}\\
&E F=2F,\qquad  \widetilde{E}F= -2F.\label{eq:homogeneity}
\end{align}
\end{proposition}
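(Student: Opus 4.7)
My plan is to verify the three families of identities in turn.

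The homogeneity identities~\eqref{eq:homogeneity} are immediate from the definition~\eqref{eq:Frobenius potential}: each monomial contains exactly two non-negative-index factors $t^a, t^b$ (so $EF = 2F$), and its signed total index equals $a + b - \sum c_i = -2$ throughout the sum (so $\widetilde{E} F = -2 F$).

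The main tool for the remaining two identities is the closed form
\[
F(t^*) = -\tfrac{1}{2}\sum_{p, q \geq 1} \frac{t^{p-1} t^{q-1}}{pq}\, R_{p, q} \bigl|_{w_l = -l\, t^{-l-1}},
\]
which follows from Theorem~\ref{theorem:first}(3) by Taylor-expanding each $R_{p, q}(w_*)$ around the origin: the homogeneity degree $p + q$ of $R_{p, q}$ (with $\deg w_l = l + 1$) automatically selects exactly those terms satisfying the constraint $a + b - \sum c_i = -2$ after the substitution. For the unit axiom, using $R_{1, q} = R_{q, 1} = w_q$ and $R_{p, q} = R_{q, p}$ from Section~\ref{section:dKP}, a direct computation gives
\[
\partial_{t^0} F = \sum_{q \geq 1} t^{q-1}\, t^{-q-1}.
\]
Its second derivative $\partial_{t^\alpha}\partial_{t^\beta}$ is already a constant independent of $t^*$, equal to the number of ways to match $\{q - 1,\, -q - 1\}$ with $\{\alpha, \beta\}$ for some $q \geq 1$. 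Since $-1 \notin \mbZ^*$, the condition $\alpha + \beta = -2$ forces one of $\{\alpha, \beta\}$ to be $\geq 0$ and the other $\leq -2$; consequently $\partial^3 F / \partial t^0\, \partial t^\alpha\, \partial t^\beta = \delta_{\alpha + \beta, -2} = \eta_{\alpha\beta}$.

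The WDVV equations~\eqref{eq:WDVVequations1} form the principal obstacle. With the closed form in hand, a triple derivative $F_{\alpha\beta\gamma}$ can be computed explicitly: $\partial_{t^\alpha}$ with $\alpha \geq 0$ acts on the prefactor $t^{p-1} t^{q-1}$, whereas $\partial_{t^\alpha}$ with $\alpha \leq -2$ acts through the substitution $w_l = -l\, t^{-l-1}$. Combined with the generating-series formula~\eqref{eq:generating function for R}, each $F_{\alpha\beta\gamma}$ becomes a double residue in auxiliary variables, and the bilinear combination $F_{\alpha\beta\mu}\eta^{\mu\nu}F_{\nu\gamma\delta}$ turns into a four-fold residue. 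The target identity~\eqref{eq:WDVVequations1} then reduces to the symmetry of this four-fold residue under the exchange $\beta \leftrightarrow \gamma$, which ultimately reflects the commutativity of the dispersionless KP flows $\partial/\partial T_n$ of Section~\ref{section:dKP}. The hard part will be the bookkeeping: the different sign patterns of $(\alpha, \beta, \gamma, \delta)$ must be treated in separate cases, and the symmetrization factors $\tfrac{1}{2}$ and $\tfrac{1}{n!}$ inherited from~\eqref{eq:Frobenius potential} must be tracked precisely so that the integrability residue matches the exact algebraic shape of~\eqref{eq:WDVVequations1}.
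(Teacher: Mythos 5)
Your treatment of the homogeneity equations~\eqref{eq:homogeneity} is correct and coincides with the paper's (a direct dimension/grading count on the definition~\eqref{eq:Frobenius potential}), and your derivation of the unit equation~\eqref{eq:WDVVequations2} from the closed form $F=-\tfrac12\sum_{p,q\ge 1}\frac{t^{p-1}t^{q-1}}{pq}R_{p,q}\big|_{w_l=-l\,t^{-l-1}}$ is a valid, if more computational, alternative to the paper's argument. But the heart of the proposition is the WDVV system~\eqref{eq:WDVVequations1}, and there you have not given a proof: you only announce that each $F_{\alpha\beta\gamma}$ can be written as a double residue, that the bilinear combination becomes a four-fold residue, and that its $\beta\leftrightarrow\gamma$ symmetry ``ultimately reflects the commutativity of the dispersionless KP flows,'' explicitly deferring ``the bookkeeping.'' That deferred bookkeeping is precisely the content of the statement; nothing in your sketch establishes the claimed symmetry. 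Moreover, the reduction to commutativity of the dKP flows is unsubstantiated and doubtful as stated: the dKP flows correspond only to the times $t^\alpha$ with $\alpha\ge 0$, whereas \eqref{eq:WDVVequations1} must hold for all $\alpha,\beta,\gamma,\delta\in\mbZ^\star$, including patterns where three or four indices are negative, in which case $F_{\alpha\beta\gamma}$ involves derivatives with respect to the pole variables only (e.g.\ the term coming from $|\cH_0(2,2;-2,-2,-2)|=2$) and is not directly governed by the $R_{p,q}$'s viewed as Hamiltonian densities of the nonnegative flows. Turning ``integrability implies associativity'' into a proof here would require either constructing the full principal hierarchy including the negative-index primary flows and proving their compatibility, or an honest residue computation covering all sign patterns --- neither of which appears in your proposal.

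The paper avoids this computation entirely: it invokes \cite[Proposition~1.8]{BRZ21}, which shows that the classes $[\oH^{\res}_0(\alpha_1,\ldots,\alpha_n)]\in H^*(\oM_{0,n})$ form a (tame, partial) cohomological field theory with metric $\eta$ and unit $e_0$; the WDVV equations \eqref{eq:WDVVequations1} and the unit equation \eqref{eq:WDVVequations2} for the genus-zero primary potential then follow from the standard pullback of the basic relation in $H^2(\oM_{0,4})$ together with the CohFT unit axiom. If you want to keep your analytic route, you must actually carry out the residue manipulation for all index patterns (or prove an equivalence between \eqref{eq:WDVVequations1} and a commutativity statement for a hierarchy that includes the negative-index flows); as written, the proposal has a genuine gap at its central claim.
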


\medskip

In other words, $F(t^*)$ is the Dubrovin--Frobenius potential for an infinite-dimensional Dubrovin--Frobenius manifold with metric $\eta=\eta_{\alpha\beta} dt^\alpha\otimes dt^\beta$, unit $e=\frac{\d}{\d t^0}$, homogeneous with respect to the two distinct Euler vector fields $E$ and $\widetilde{E}$, only the first of which is compatible with the unit in the sense that $[e,E]=e$. See \cite{Dub96} for the general theory of (finite-dimensional) Dubrovin--Frobenius manifolds. Infinite-dimensional Dubrovin--Frobenius manifolds have appeared in the literature, for instance in \cite{CDM11,Rai12,MWZ21}, but for the formal version needed here see the discussion in \cite[Section~4.1]{BR22} on tame infinite-rank partial cohomological field theories.

\medskip

\begin{proof}
In \cite[Proposition~1.8]{BRZ21} it was proved that, for $g,n\geq 0$ such that $2g-2+n>0$ and for integers $\alpha_1,\ldots,\alpha_n \in \mbZ^\star$, the fundamental classes of the closure~$\oH^{\res}_g(\alpha_1,\ldots,\alpha_n)$ in~$\oM_{g,n}$ of the loci~$\cH^{\res}_g(\alpha_1,\ldots,\alpha_n)$ in $\cM_{g,n}$ whose points correspond to genus $g$ smooth curves with marked points $z_1,\ldots,z_n$ such that there exists a residueless meromorphic differential whose divisor of zeros and poles is $\sum_{i=1}^n \alpha_i[z_i]$, which vanish unless $\sum_{i=1}^n \alpha_i = 2g-2$, form a tame partial cohomological field theory with phase space $V=\spn(e_\alpha)_{\alpha \in \mbZ^\star}$, metric $\eta(e_\alpha\otimes e_\beta)=\delta_{\alpha+\beta,-2}$, and unit $e_0$. This, in particular, entails the well-known fact that the genus $0$ primary potential~$F(t^*)$ of this partial CohFT satisfies the WDVV equations \eqref{eq:WDVVequations1}, \eqref{eq:WDVVequations2}.

\medskip

Homogeneity equations \eqref{eq:homogeneity} follow from the fact that, for $n\geq 3$, $\cH^{\res}_0(\alpha_1,\ldots,\alpha_n)$ is zero-dimensional only if exactly two $\alpha_i$ among the $\alpha_1,\ldots,\alpha_n \in \mbZ^\star$ are nonnegative, and empty unless $\sum_{i=1}^{n}\alpha_i = -2$.
\end{proof}

\medskip

Consider the following formal power series:
\begin{gather*}
\widetilde{p}(\lambda):=\sum_{\alpha\geq 0} t^\alpha \lambda^\alpha, \qquad \widetilde{P}(\lambda):=\sum_{\alpha \geq 0} t^\alpha \frac{\lambda^{\alpha+1}}{\alpha+1},
\end{gather*}
satisfying $\widetilde{P}'(\lambda) = \widetilde{p}(\lambda)$.

\medskip

\begin{proposition}
The Dubrovin--Frobenius potential \eqref{eq:Frobenius potential} can be written as
\begin{equation}\label{eq:Frobenius potential1}
\begin{split}
F(t^*) &= \res_{\lambda_2=0} \res_{\lambda_1=0} \left[ -\frac{1}{2} \widetilde{p}(\lambda_1) \widetilde{p}(\lambda_2) \log \left(\frac{p(\lambda_1)-p(\lambda_2)}{\lambda_1-\lambda_2} \right) \right]=\\
&=\res_{p=0} \frac{\widetilde{P}(\lambda(p))_+ \d_p \widetilde{P}(\lambda(p))}{2}.
\end{split}
\end{equation}
\end{proposition}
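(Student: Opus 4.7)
The plan is to rewrite $F(t^*)$ in terms of the dispersionless KP polynomials $R_{p,q}$ of Section~\ref{section:dKP}, after which both expressions in~\eqref{eq:Frobenius potential1} become immediate consequences of the lemma on the generating series of $R_{p,q}$ and of its explicit residue formula.

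First I would expand the definition~\eqref{eq:Frobenius potential} of $F$ using item~(3) of Theorem~\ref{theorem:first} together with the substitution $w_\alpha = -\alpha t^{-\alpha-1}$ coming from Section~\ref{section:dKP}, so that each factor $(c_i-1) t^{-c_i}$ becomes $-w_{c_i-1}$. The alternating sign $(-1)^{n+1}$ of item~(3) combines with the sign $(-1)^n$ from this substitution to give $-1$, and the resulting sum over $n\ge 1$, $c_1,\dots,c_n\ge 2$ of multiderivatives $\frac{\d^n R_{a+1,b+1}}{\d w_{c_1-1}\cdots\d w_{c_n-1}}|_{w_*=0}$ times $\frac{w_{c_1-1}\cdots w_{c_n-1}}{n!}$ assembles into the Taylor expansion of $R_{a+1,b+1}(w)$ about $w_*=0$. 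Since every monomial of $R_{p,q}$ has positive weighted degree, this Taylor expansion recovers the full polynomial, yielding
$$F(t^*) = -\frac{1}{2}\sum_{a,b\ge 0}\frac{t^a t^b}{(a+1)(b+1)}\, R_{a+1,b+1},$$
with the $w_*$ evaluated as above.

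For the first equality in~\eqref{eq:Frobenius potential1}, note that the identity $p(\lambda)=\lambda - \sum_{\alpha\ge 1}\frac{w_\alpha}{\alpha}\lambda^{-\alpha}$ gives
$$\frac{p(\lambda_1)-p(\lambda_2)}{\lambda_1-\lambda_2} = 1 - \sum_{i\ge 1}\frac{w_i}{i}\,\frac{\lambda_1^{-i}-\lambda_2^{-i}}{\lambda_1-\lambda_2},$$
which is the argument of the logarithm in~\eqref{eq:generating function for R} under the substitution $z=\lambda_1^{-1}$, $\zeta=\lambda_2^{-1}$; equivalently, the intermediate step of the proof of that lemma yields directly
$\log\bigl(\frac{p(\lambda_1)-p(\lambda_2)}{\lambda_1-\lambda_2}\bigr) = \sum_{p,q\ge 1}\frac{R_{p,q}}{pq}\lambda_1^{-p}\lambda_2^{-q}$ plus terms regular in $\lambda_1$ or in $\lambda_2$. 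Since $\widetilde{p}(\lambda_i)=\sum_{\alpha\ge 0}t^\alpha\lambda_i^\alpha$ involves only nonnegative powers, the regular tail contributes nothing to the double residue at $\lambda_1=\lambda_2=0$, and one extracts exactly $-\frac{1}{2}\sum_{\alpha,\beta\ge 0}\frac{t^\alpha t^\beta}{(\alpha+1)(\beta+1)}R_{\alpha+1,\beta+1}$, matching the formula for $F$ above.

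For the second equality, I would plug the explicit formula $\frac{R_{\alpha+1,\beta+1}}{(\alpha+1)(\beta+1)} = -\res_{\pi=0}\bigl(\frac{\lambda(\pi)^{\beta+1}_+}{\beta+1}\,\lambda'(\pi)\lambda(\pi)^\alpha\bigr)$ into the expression for $F$ and pull the sums over $\alpha,\beta\ge 0$ inside the residue; they combine into
$$\sum_{\beta\ge 0}t^\beta\frac{\lambda(\pi)^{\beta+1}_+}{\beta+1} = \widetilde{P}(\lambda(\pi))_+, \qquad \sum_{\alpha\ge 0}t^\alpha \lambda(\pi)^\alpha\lambda'(\pi) = \widetilde{p}(\lambda(\pi))\lambda'(\pi) = \d_\pi\widetilde{P}(\lambda(\pi)),$$
producing the claimed formula $\res_{p=0}\frac{\widetilde{P}(\lambda(p))_+\d_p\widetilde{P}(\lambda(p))}{2}$. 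The main obstacle is purely bookkeeping—tracking the cancellation of the two signs in Step~1, handling the $(\cdot)_+$ termwise, and verifying that the regular remainder of the logarithm is annihilated by the double residue—nothing substantive beyond that, since the real content is already packaged in identity~\eqref{eq:generating function for R} and in Theorem~\ref{theorem:first}.
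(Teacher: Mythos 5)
Your argument is correct and follows essentially the same route as the paper: both equalities are reduced, via the equality (1)=(3) of Theorem~\ref{theorem:first}, to the generating-series identity~\eqref{eq:generating function for R} and the residue formula $R_{i,j}=-\res_{p=0}\left(\lambda(p)^j_+\d_p\lambda(p)^i\right)$. The only cosmetic difference is in the second equality, where the paper repeats the log-splitting and change-of-variables manipulation from the lemma's proof directly on the double-residue expression, while you instead substitute the closed residue formula for $R_{\alpha+1,\beta+1}$ and resum over $\alpha,\beta$ --- the same computation packaged differently.
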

\begin{proof}
The right-hand side in the first line of \eqref{eq:Frobenius potential1} is clearly quadratic in the variables $t^\alpha$ with $\alpha\geq0$ and so is $F(t^*)$ by the homogeneity condition $EF=2F$, hence, in order to prove the first equality, it is enough to check that
$$
\frac{\d^2 F}{\d t^\alpha \d t^\beta} = \res_{\lambda_2=0}\res_{\lambda_1=0}\left[-\lambda_1^{\alpha}\lambda_2^{\beta} \log \left(\frac{p(\lambda_1)-p(\lambda_2)}{\lambda_1-\lambda_2}\right) \right], \quad \alpha,\beta\geq 0,
$$
which readily follows from equation \eqref{eq:generating function for R} and equality (1)=(3) in Theorem \ref{theorem:first}. For the second equality, we have
\begin{align*}
F(t^*) &= \res_{\lambda_2=0} \res_{\lambda_1=0} \left[ -\frac{1}{2}\widetilde{P}'(\lambda_1)\widetilde{P}'(\lambda_2) \log\left(\frac{p(\lambda_1)-p(\lambda_2)}{\lambda_1-\lambda_2}\right)\right]=\\
&= \res_{\lambda_2=0} \res_{\lambda_1=0} \left[ -\frac{1}{2}\widetilde{P}'(\lambda_1)\widetilde{P}'(\lambda_2) \left(\log\left(\frac{p(\lambda_1)-p(\lambda_2)}{p(\lambda_1)}\right)+\cancel{ \log\left(\frac{p(\lambda_1)}{\lambda_1-\lambda_2}\right)}\right)\right]=\\
&=\res_{p_2=0}\res_{p_1=0} \left[ -\frac{1}{2}\d_{p_1}\widetilde{P}(\lambda(p_1))\d_{p_2}\widetilde{P}(\lambda(p_2)) \log\left(\frac{p_1-p_2}{p_1}\right)\right] =\\
&= \res_{p=0} \frac{\widetilde{P}(\lambda(p))_+ \d_p \widetilde{P}(\lambda(p))}{2},
\end{align*}
where the extra term in the second line can be eliminated since it is regular as $\lambda_2\to 0$.
\end{proof}

\medskip

According to \cite{DZ01}, the primary flows of the principal hierarchy of the above Dubrovin--Frobenius manifold are computed as
\begin{equation}\label{eq:principal hierarchy}
\frac{\d t^{-\alpha-2}}{\d t^\beta_0} = \d_x \frac{\d^2 F(t^*)}{\d t^\alpha \d t^\beta}, \quad \alpha,\beta \in \mbZ^\star,
\end{equation}
where $t^\alpha=t^\alpha(x,t^*_*)$ is now seen as a formal loop on the Dubrovin--Frobenius manifold depending on time variables $t^\alpha_d$, $\alpha\in \mbZ^\star$, $d\geq 0$. Obviously, if $\alpha,\beta\ge 0$, then the right-hand side of~\eqref{eq:principal hierarchy} doesn't depend on $t^\gamma$ with $\gamma\ge 0$. This implies that restricting to $\alpha,\beta\ge 0$, we get a subsystem of the principal hierarchy, which by the equality (1)=(3) of Theorem~\ref{theorem:first} coincides with the dispersionless KP hierarchy. Thus, the above Dubrovin--Frobenius manifold is a natural Dubrovin--Frobenius manifold underlying the dispersionless KP hierarchy.

\medskip

%%%%%%%%%%%%%%%%%%%%%%%%%%%%%%%%%%%%%%%%%%%%%%%%%%%%%%%%%%%%%%%%%%%%%%%%%%%%%%%
%%%%%%%%%%%%%%%%%%%%%%%%%%%%%%%%%%%%%%%%%%%%%%%%%%%%%%%%%%%%%%%%%%%%%%%%%%%%%%%

\section{Differentials of the second type}

As remarked in the introduction, the following theorem was proved in \cite{CC19,GT22}. We provide an alternative proof in a different spirit, using Lemma \ref{lemma:WDVV}, a recursive relation between the number of differentials of the first and second type induced by the WDVV relations in the cohomology of the moduli space of rational stable curves.

\medskip

\begin{theorem}\label{theorem:second type}
We have
\begin{gather*}
|\cH_0(a,-b,-c;-d_1,\ldots,-d_n)|=n!\prod_{i=1}^n(d_i-1),
\end{gather*}
where $a\ge 0$, $b,c\ge 1$, $d_i\ge 2$, and $a-b-c-\sum d_i=-2$.
\end{theorem}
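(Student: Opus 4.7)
The plan is to prove the formula by induction on $n$, the number of residueless poles, with Lemma~\ref{lemma:WDVV} (the WDVV-based recursion advertised in the introduction) as the inductive engine. The base case $n=0$ is immediate: the divisor $a[z_1]-b[z_2]-c[z_3]$ with $a-b-c=-2$ determines a meromorphic differential on $\mbCP^1$ uniquely up to scale, and $\PGL(2,\mbC)$ acts transitively on ordered triples of distinct points, so $|\cH_0(a,-b,-c)|=1$, matching $0!\cdot\prod_{i\in\varnothing}(d_i-1)$.

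For the inductive step, I would apply the WDVV relation $D_{i_1 i_2 | j_1 j_2}=D_{i_1 j_1 | i_2 j_2}$ in $H^2(\oM_{0,n+3})$, pulled back to the cycle $[\oH_0(a,-b,-c;-d_1,\ldots,-d_n)]$, taking the WDVV quadruple to consist of the zero $z_1$, the two unconstrained poles $z_2,z_3$, and a chosen residueless pole, say $z_4$. By the plumbing and twisted-differentials description of boundary strata from \cite{BCGGM18,BCGGM19,CMZ20}, each boundary divisor intersected with $\oH_0$ decomposes as a disjoint union of products $\oH_0\times\oH_0$, where the node carries order $m$ on one component and $-m-2$ on the other for integers $m$ compatible with the orders already present on each side. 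Expanding both sides of the pulled-back identity then produces two sums of such decompositions, indexed by partitions of $\{d_2,\ldots,d_n\}$ and by $m$.

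The decompositions split naturally into two kinds. When the zero $z_1$ sits on the same component as both unconstrained poles $z_2,z_3$, the opposite component contains a single zero (the node) together only with residueless poles, so it is of type~1 and is counted by Theorem~\ref{theorem:first}; in the simplest case this reduces further to Example~\ref{example:n2-residue}'s explicit $\min$. When $z_1$ is separated from $\{z_2,z_3\}$, the component containing $z_1$ is again of type~2 but with strictly fewer residueless poles, and the induction hypothesis applies. Organizing these contributions, one expects the WDVV identity to collapse to a single-step recursion of the shape
$$|\cH_0(a,-b,-c;-d_1,\ldots,-d_n)|=n(d_n-1)\cdot|\cH_0(a',-b',-c';-d_1,\ldots,-d_{n-1})|$$
(after symmetrizing over the choice of distinguished residueless pole), where $(a',b',c')$ satisfies $a'-b'-c'=a-b-c+d_n$; iterating $n$ times then yields the target formula $n!\prod_{i=1}^n(d_i-1)$.

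The main obstacle will be the combinatorial bookkeeping. After the substitutions above, both sides of the pulled-back WDVV identity become sums of products of explicit but somewhat intricate numbers (from Theorem~\ref{theorem:first} and the inductive hypothesis), and it is not a priori clear that they collapse to so clean a recursion; the remarkable independence of the final count on $(a,b,c)$ must in particular emerge as a nontrivial cancellation. I expect the decisive step to be a telescoping identity on sums of $\min$-expressions weighted by partition multinomials, ultimately reducing to elementary binomial combinatorics.
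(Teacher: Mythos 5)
Your overall instinct---derive a recursion mixing first- and second-type counts from the WDVV relation pulled back from $\oM_{0,4}$---is indeed the mechanism the paper uses, but the specific way you set it up has a genuine gap: a dimension mismatch. The locus $\oH_0(a,-b,-c;-d_1,\ldots,-d_n)$ is \emph{zero}-dimensional in $\oM_{0,n+3}$ (by the dimension formula \eqref{eq:dimension of oH} with $g=0$, $k=3$ and some negative $a_i$), so capping its class with the codimension-one WDVV relation lands in $H_{-2}=0$ and yields only the vacuous identity $0=0$. No choice of WDVV quadruple on this space can produce the recursion you want; to get a nontrivial identity one must intersect the WDVV class with a \emph{one}-dimensional cycle. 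This is exactly why the paper works instead with the auxiliary space $\oH_0(a,d,-e,-f;-c,-C)$, which has \emph{two} zeros, two unconstrained poles and residueless poles $(-c,-C)$, hence $k=4$ and dimension $1$, and takes the WDVV cross-ratio on the four points labelled $a,-c,-e,-f$ while distributing $d$ and $-C$ over the two vertices. The resulting boundary terms then naturally mix the generating series $P^{a,b}$ of first-type counts and $\theta^a_{b,c}$ of second-type counts, giving the seven-term recursion of Lemma~\ref{lemma:WDVV}.

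Two further points in your sketch would also need repair even after fixing the dimension issue. First, the intersection of the one-dimensional cycle with the boundary divisors is \emph{not} a disjoint union of transversal products $\oH_0\times\oH_0$: the multiscale/global-residue-condition analysis produces, besides two-vertex graphs met transversally, three-vertex two-level graphs (two lower-level components whose residues at the nodes cancel), and the local model $\epsilon_1^{\kappa_1}=\epsilon_2^{\kappa_2}$ forces intersection multiplicities $b_2-1$ and $b_1+b_2-2$; these multiplicities are precisely where the factors $d_i-1$ in the final answer come from, so omitting them loses the theorem. Second, the recursion one obtains is not the clean one-step relation $n(d_n-1)\cdot|\cH_0(a',-b',-c';\ldots)|$ you conjecture; in the paper it determines all $\theta^a_{e,f}$ from low cases (via properties~\eqref{eq:elementary properties of theta} and Example~\ref{example4}), and one must then \emph{separately verify}, by a generating-function computation with $A(x,y)$ and $B(x)$, that the candidate answer $n!\prod_i(d_i-1)$ satisfies the same seven-term identity. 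Your base case $|\cH_0(a,-b,-c)|=1$ is correct, but the inductive engine as you describe it does not run.
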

\begin{proof}
Let us introduce the following generating series:
\begin{align*}
&\theta^a_{b,c}(t_2,t_3,\ldots):=\sum_{n\ge 0}\frac{1}{n!}\sum_{\substack{d_1,\ldots,d_n\ge 2\\\sum d_i=a-b-c+2}}|\cH_0(a,-b,-c;-d_1,\ldots,-d_n)|t_{d_1}\cdots t_{d_n}, && a\ge 0,\,\, b,c\ge 1,\\
&P^{a,b}(t_2,t_3,\ldots):=\sum_{n\ge 1}\frac{1}{n!}\sum_{\substack{d_1,\ldots,d_n\ge 2\\\sum d_i=a+b+2}}|\cH_0(a,b;-d_1,\ldots,-d_n)|t_{d_1}\cdots t_{d_n}, && a,b\ge 0. 
\end{align*}
Clearly, we have
\begin{gather}\label{eq:elementary properties of theta}
\left\{
\begin{aligned}
&\theta^a_{b,c}=0\quad\text{if $b+c>a+2$},\\
&\theta^a_{b,c}=1\quad\text{if $b+c=a+2$},\\
&\theta^a_{b,c}=0\quad\text{if $b+c=a+1$}.
\end{aligned}
\right.
\end{gather}

\medskip

\begin{lemma}\label{lemma:WDVV}
For any $a,d,e,f\ge 1$ and $c\ge 2$, there is the following relation:
\begin{align}
\sum_{b\ge 2}\frac{\d^2 P^{a,d}}{\d t_c\d t_b}\theta^{b-2}_{e,f}+\sum_{b\ge 2}\frac{\d P^{a,b-2}}{\d t_c}\frac{\d\theta^d_{e,f}}{\d t_b}=&\sum_{b\ge 2}\theta^a_{f,b}\frac{\d P^{b-2,d}}{\d t_e\d t_c}+\theta^a_{f,1}\frac{\d\theta^d_{1,e}}{\d t_c}+\sum_{b\ge 2}\frac{\d P^{a,b-2}}{\d t_f}\frac{\d\theta^d_{b,e}}{\d t_c}\label{eq:WDVV2}\\
&+\sum_{b_1,b_2\ge 2}(b_2-1)\theta^a_{f,b_1}\theta^d_{e,b_2}\frac{\d P^{b_1-2,b_2-2}}{\d t_c}\notag\\
&+\sum_{b_1,b_2\ge 2}(b_1+b_2-2)\theta^a_{f,b_1}\frac{\d\theta^d_{e,b_2}}{\d t_c}P^{b_1-2,b_2-2}\notag.
\end{align}
\end{lemma}
\begin{proof}
See Section~\ref{section:proof of lemma}.
\end{proof}

\medskip

Consider relation~\eqref{eq:WDVV2} with $d=3$ and $c=2$. Clearly,
\begin{gather*}
\frac{\d^2 P^{a,3}}{\d t_2\d t_b}=0,\quad\text{if $b>a+3$},
\end{gather*}
and from Example~\ref{example:n2-residue} it follows that
\begin{gather*}
\frac{\d^2 P^{a,3}}{\d t_2\d t_{a+3}}=1.
\end{gather*}
We see that the coefficient of $\theta^{b-2}_{e,f}$ in the first sum on the left-hand side of equation~\eqref{eq:WDVV2} is one for $b=a+3$ and is zero for $b>a+3$. Looking also at the other terms in~\eqref{eq:WDVV2}, we notice that if $a\ge 3$, then relation~\eqref{eq:WDVV2} allows to express $\theta^{a+1}_{e,f}$ in terms of $\theta^p_{q,r}$ with $p\le a$. So relation~\eqref{eq:WDVV2} allows to express all polynomials $\theta^a_{e,f}$ in terms of the polynomials 
$\theta^p_{q,r}$ with $p\le 3$, but all these polynomials are determined by properties~\eqref{eq:elementary properties of theta} and Example~\ref{example4}.

\medskip

So it remains to check the following statement.

\begin{lemma}
The polynomials $\widetilde\theta^a_{b,c}$, $a\ge 0$, $b,c\ge 1$, defined by
$$
\widetilde\theta^a_{b,c}:=\sum_{n\ge 0}\sum_{\substack{d_1,\ldots,d_n\ge 2\\\sum d_i=a+2-b-c}}\prod_{i=1}^n(d_i-1)t_{d_i}
$$
satisfy equation~\eqref{eq:WDVV2}.
\end{lemma}
\begin{proof}
Let us substitute the polynomials $\ttheta^a_{b,c}$ in~\eqref{eq:WDVV2} and sum both sides over $a,d\ge 0$ with the coefficient $x^{a+1}y^{d+1}$. It is sufficient to prove that we obtain an equality of two formal power series in $x,y$, and $t_i$. 

\medskip

Introduce the following formal power series:
\begin{gather*}
A(x,y):=1+\sum_{i\ge 0}\frac{x^{i}-y^i}{x^{-1}-y^{-1}}t_{i+1},\qquad B(x):=1-\sum_{i\ge 1}(i-1)x^i t_i.
\end{gather*}
The equality (1)=(3) of Theorem~\ref{theorem:first} together with formula~\eqref{eq:generating function for R} imply that
$$
\sum_{a,b\ge 0}P^{a,b}x^{a+1}y^{b+1}=-\log A(x,y).
$$
It is easy to see that 
$$
\ttheta^a_{b,c}=\Coef_{z^{a+2-b-c}}\frac{1}{B(z)}.
$$

\medskip

We compute
\begin{flalign*}
E_1:=\sum_{a,d\ge 0}x^{a+1}y^{d+1}\sum_{b\ge 2}\frac{\d^2 P^{a,d}}{\d t_c\d t_b}\ttheta^{b-2}_{e,f}=&\frac{x^{c-1}-y^{c-1}}{x^{-1}-y^{-1}}\frac{1}{A(x,y)^2}\sum_{b\ge 2}\frac{x^{b-1}-y^{b-1}}{x^{-1}-y^{-1}}\Coef_{z^{b-e-f}}\frac{1}{B(z)}=&&\\
=&\frac{x^{c-1}-y^{c-1}}{x^{-1}-y^{-1}}\frac{1}{A(x,y)^2}\sum_{b\ge 2}\frac{x^{b-1}-y^{b-1}}{x^{-1}-y^{-1}}\Coef_{z^{b-1}}\frac{z^{e+f-1}}{B(z)}=&&\\
=&\boxed{\frac{x^{c-1}-y^{c-1}}{x^{-1}-y^{-1}}\frac{1}{A(x,y)^2}\left(\frac{x^{e+f-1}}{B(x)}-\frac{y^{e+f-1}}{B(y)}\right)\frac{1}{x^{-1}-y^{-1}}};&&
\end{flalign*}
\begin{flalign*}
E_2:=&\sum_{\substack{a,d\ge 0\\b\ge 2}}x^{a+1}y^{d+1}\frac{\d P^{a,b-2}}{\d t_c}\frac{\d\ttheta^d_{e,f}}{\d t_b}=&&\\
=&-\sum_{\substack{d\ge 0\\b\ge 2}}y^{d+1}\Coef_{y^{b-1}}\left(\frac{x^{c-1}-y^{c-1}}{x^{-1}-y^{-1}}\frac{1}{A(x,y)}\right)\Coef_{z^{d+2-e-f}}\frac{(b-1)z^b}{B(z)^2}=&&\\
=&-\Coef_{y^{b-1}}\left(\frac{x^{c-1}-y^{c-1}}{x^{-1}-y^{-1}}\frac{1}{A(x,y)}\right)\frac{(b-1)y^{b+e+f-1}}{B(y)^2}=&&\\
=&\boxed{-y\d_y\left(\frac{x^{c-1}-y^{c-1}}{x^{-1}-y^{-1}}\frac{1}{A(x,y)}\right)\frac{y^{e+f}}{B(y)^2}};&&
\end{flalign*}
\begin{flalign*}
E_3:=&\sum_{a,d\ge 0}x^{a+1}y^{d+1}\sum_{b\ge 2}\theta^a_{f,b}\frac{\d P^{b-2,d}}{\d t_e\d t_c}=&&\\
=&\sum_{b\ge 2}\frac{x^{f+b-1}}{B(x)}\Coef_{x^{b-1}}\left(\frac{x^{e-1}-y^{e-1}}{x^{-1}-y^{-1}}\frac{x^{c-1}-y^{c-1}}{x^{-1}-y^{-1}}\frac{1}{A(x,y)^2}\right)=&&\\
=&\boxed{\frac{x^{f}}{B(x)}\frac{x^{e-1}-y^{e-1}}{x^{-1}-y^{-1}}\frac{x^{c-1}-y^{c-1}}{x^{-1}-y^{-1}}\frac{1}{A(x,y)^2}};&&
\end{flalign*}
\begin{flalign*}
E_4:=\sum_{a,d\ge 0}x^{a+1}y^{d+1}\theta^a_{f,1}\frac{\d\theta^d_{1,e}}{\d t_c}=\boxed{\frac{(c-1)x^f y^{e+c}}{B(x)B(y)^2}};&&
\end{flalign*}
\begin{flalign*}
E_5:=&\sum_{a,d\ge 0}x^{a+1}y^{d+1}\sum_{b\ge 2}\frac{\d P^{a,b-2}}{\d t_f}\frac{\d\theta^d_{b,e}}{\d t_c}=&&\\
=&-\sum_{b\ge 2}\Coef_{y^{b-1}}\left(\frac{x^{f-1}-y^{f-1}}{x^{-1}-y^{-1}}\frac{1}{A(x,y)}\right)\frac{(c-1)y^{b+e+c-1}}{B(y)^2}=&&\\
=&\boxed{-\frac{x^{f-1}-y^{f-1}}{x^{-1}-y^{-1}}\frac{1}{A(x,y)}\frac{(c-1)y^{e+c}}{B(y)^2}};&&
\end{flalign*}
\begin{flalign*}
E_6:=&\sum_{a,d\ge 0}x^{a+1}y^{d+1}\sum_{b_1,b_2\ge 2}(b_2-1)\theta^a_{f,b_1}\theta^d_{e,b_2}\frac{\d P^{b_1-2,b_2-2}}{\d t_c}=&&\\
=&-\sum_{b_1,b_2\ge 2}(b_2-1)\frac{x^{f+b_1-1}}{B(x)}\frac{y^{e+b_2-1}}{B(y)}\Coef_{x^{b_1-1}y^{b_2-1}}\left(\frac{x^{c-1}-y^{c-1}}{x^{-1}-y^{-1}}\frac{1}{A(x,y)}\right)=&&\\
=&\boxed{-\frac{x^{f}}{B(x)}\frac{y^{e}}{B(y)}y\d_y\left(\frac{x^{c-1}-y^{c-1}}{x^{-1}-y^{-1}}\frac{1}{A(x,y)}\right)};&&
\end{flalign*}
\begin{flalign*}
E_7:=&\sum_{a,d\ge 0}x^{a+1}y^{d+1}\sum_{b_1,b_2\ge 2}(b_1+b_2-2)\theta^a_{f,b_1}\frac{\d\theta^d_{e,b_2}}{\d t_c}P^{b_1-2,b_2-2}=&&\\
=&-\sum_{b_1,b_2\ge 2}(b_1+b_2-2)\frac{x^{f+b_1-1}}{B(x)}\frac{(c-1)y^{c+e+b_2-1}}{B(y)^2}\Coef_{x^{b_1-1}y^{b_2-1}}\log A(x,y)=&&\\
=&\boxed{-\frac{x^{f}}{B(x)}\frac{(c-1)y^{e+c}}{B(y)^2}\left(x\d_x+y\d_y\right)\log A(x,y)}.&&
\end{flalign*}

\medskip

Let $\mu_a:=x^{a-1}-y^{a-1}$ and $\tA(x,y):=(x^{-1}-y^{-1})A(x,y)$. We compute
\begin{align*}
&E_1-E_3=\frac{\mu_c y^e}{\tA(x,y)^2}\left(\frac{x^f y^{-1}}{B(x)}-\frac{y^{f-1}}{B(y)}\right),\\
&E_2-E_6=y\d_y\left(\frac{\mu_c}{\tA(x,y)}\right)\frac{y^e}{B(y)}\left(\frac{x^f}{B(x)}-\frac{y^f}{B(y)}\right).
\end{align*}
We have to check the vanishing of the expression
\begin{align}
&y^{-e}(E_1+E_2-E_3-E_4-E_5-E_6-E_7)=\label{eq:seven terms}\\
=&\frac{\mu_c}{\tA(x,y)^2}\left(\frac{x^f y^{-1}}{B(x)}-\underline{\frac{y^{f-1}}{B(y)}}\right)+y\d_y\left(\frac{\mu_c}{\tA(x,y)}\right)\frac{1}{B(y)}\left(\frac{x^f}{B(x)}-\underline{\frac{y^f}{B(y)}}\right)-\frac{(c-1)x^f y^{c}}{B(x)B(y)^2}\notag\\
&+\left(x^{f-1}-\underline{y^{f-1}}\right)\frac{1}{\tA(x,y)}\frac{(c-1)y^{c}}{B(y)^2}+\frac{x^{f}}{B(x)}\frac{(c-1)y^{c}}{B(y)^2}\left(x\d_x+y\d_y\right)\log A(x,y).\notag
\end{align}
Note that
\begin{gather}\label{eq:dtA-and-B}
y^2\d_y\tA(x,y)=B(y).
\end{gather}
Let us collect the underlined terms on the right-hand side of~\eqref{eq:seven terms}:
$$
-\frac{\mu_c y^{f-1}}{\tA(x,y)^2 B(y)}-\frac{(y\d_y\mu_c)y^f}{\tA(x,y)B(y)^2}+\frac{\mu_c\left(y^2\d_y\tA(x,y)\right)y^{f-1}}{\tA(x,y)^2B(y)^2}-\frac{(c-1)y^{c+f-1}}{\tA(x,y)B(y)^2}.
$$
The second and the fourth terms here obviously cancel each other, while the first and the third terms cancel each other by~\eqref{eq:dtA-and-B}.

\medskip

Thus, the expression on the right-hand side of~\eqref{eq:seven terms} is equal to
\begin{align*}
&\frac{\mu_c}{\tA(x,y)^2}\frac{x^fy^{-1}}{B(x)}+y\d_y\left(\frac{\mu_c}{\tA(x,y)}\right)\frac{x^f}{B(x)B(y)}-\frac{(c-1)x^f y^{c}}{B(x)B(y)^2}\\
&+\frac{x^{f-1}}{\tA(x,y)}\frac{(c-1)y^{c}}{B(y)^2}+\frac{x^{f}}{B(x)}\frac{(c-1)y^{c}}{B(y)^2}\left(x\d_x+y\d_y\right)\log A(x,y)=\\
=&\cancel{\frac{\mu_c}{\tA(x,y)^2}\frac{x^f y^{-1}}{B(x)}}-\frac{(c-1)y^{c-1}}{\tA(x,y)}\frac{x^f}{B(x)B(y)}-\cancel{\frac{\mu_c y^{-1}}{\tA(x,y)^2}\frac{x^f}{B(x)}}-\underline{\frac{(c-1)x^f y^{c}}{B(x)B(y)^2}}\\
&+\frac{x^{f-1}}{\tA(x,y)}\frac{(c-1)y^{c}}{B(y)^2}+\underline{\frac{x^{f}}{B(x)}\frac{(c-1)y^{c}}{B(y)^2}\frac{\sum_{i\ge 0}(i+1)\frac{x^{i}-y^i}{x^{-1}-y^{-1}}t^{i+1}}{A(x,y)}}=\\
=&-\frac{(c-1)y^{c-1}}{\tA(x,y)}\frac{x^f}{B(x)B(y)}+\frac{x^{f-1}}{\tA(x,y)}\frac{(c-1)y^{c}}{B(y)^2}+\frac{x^{f}}{B(x)}\frac{(c-1)y^{c}}{B(y)^2}\frac{\sum_{i\ge 0}i\frac{x^{i}-y^i}{x^{-1}-y^{-1}}t^{i+1}-1}{A(x,y)}=\\
=&-\frac{(c-1)y^{c-1}}{\tA(x,y)}\frac{x^f}{B(x)B(y)}+\frac{x^{f-1}}{\tA(x,y)}\frac{(c-1)y^{c}}{B(y)^2}+\frac{x^{f}}{B(x)}\frac{(c-1)y^{c}}{B(y)^2}\frac{\frac{B(y)}{y}-\frac{B(x)}{x}}{\tA(x,y)}=\\
=&0,
\end{align*}
as required.
\end{proof}
\end{proof}

\medskip

\section{Proof of Lemma \ref{lemma:WDVV}}\label{section:proof of lemma}

\subsection{Multiscale differentials with residue conditions}\label{section:multiscale} For $A=(a_1,\ldots,a_k) \in \mbZ^k$ and $B=(b_1,\ldots,b_n) \in \mbZ^n$ with $b_j\le -2$, $1 \le j\le n$, let us briefly review the properties of the moduli space $\oH_g (A;B)$ from the point of view of multiscale differentials with residue conditions as treated in \cite{CMZ20}.

\medskip

In \cite[Sections~3 and~4.1]{CMZ20} (see also~\cite[Section~2]{BCGGM19}) the authors construct a proper smooth Deligne--Mumford stack $\oB_g(A;B)$ as a moduli stack for families of equivalence classes of projectivized multiscale differentials with residue conditions on stable curves, whose definition we will recall in this section. Our motivation comes from the fact that this moduli stack comes with a forgetful map $p\colon\oB_g(A;B)\to \oM_{g,k+n}$ associating to a projectivized multiscale differential on a stable marked curve $C$ the stable marked curve itself and that this map restricts to an isomorphism of Deligne--Mumford stacks $p:B_g(A;B)\to\cH_g(A;B)$ on the open substack $B_g(A;B) = p^{-1}(\cM_{g,k+n})$ of projectivized multiscale differentials on smooth curves, so that 
$$
[\oH_g(A;B)]= p_*[\oB_g(A;B)].
$$
Moreover, the boundary $\oB_g(A;B)\setminus B_g(A;B)$ is a normal crossing divisor and \cite{CMZ20} gives a modular description of these boundary strata. Crucially, the map $p$ is compatible with the stratified structures of the two spaces and we will use this fact to understand the intersection of $[\oH_g (A;B)]$ with the boundary strata of $\oM_{g,k+n}$, in particular those formed by stable curves with one separating node.

\medskip

In what follows, given a stable curve $C$ with associated stable graph $\Gamma_C$, we will denote its irreducible components by $C_v$ for $v\in V(\Gamma_C)$ and we will use the same notation for the marked points of $C$ and the corresponding legs of the associated stable graph $\Gamma_C$, for nodes of $C$ and the corresponding edges of $\Gamma_C$, and for branches of nodes on irreducible components $C_v$ of $C$ and the corresponding half-edges of $\Gamma_C$. Given a leg $x_i\in L(\Gamma_C)$ or a half-edge $h\in H(\Gamma_C)$, we denote by $v(x_i)$ or $v(h)$ the vertex to which they are attached.

\medskip

Firstly, an \emph{enhanced level graph} is a stable graph $\Gamma$ of genus $g$ with a set $L(\Gamma)$ of $n$ marked legs together with:
\begin{enumerate}
\item a total preorder\footnote{A preorder relation $\leq$ is reflexive and transitive, but $x \leq y$ and $y \leq x$ do not necessarily imply $x=y$.} on the set $V(\Gamma)$ of vertices. We describe this preorder by a surjective level function $\ell\colon V(\Gamma)\to \{0,-1,\ldots,-L\}$. An edge is called \emph{horizontal} if it is attached to vertices on the same level and \emph{vertical} otherwise.

\smallskip

\item a function $\kappa\colon E(\Gamma)\to \mbZ_{\geq 0}$ assigning a nonnegative integer~$\kappa_e$ to each edge $e\in E(\Gamma)$, such that $\kappa_e=0$ if and only if $e$ is horizontal.
\end{enumerate}
For every level $0\leq j\leq -L$, let $C_{(j)}$ be the (possibly disconnected) stable curve obtained from~$C$ by removing all irreducible components whose level is not $j$ and let $C_{(>j)}$  be the (possibly disconnected) stable curve obtained from~$C$ by removing all irreducible components whose level is smaller than or equal to~$j$.

\medskip

Secondly, given a meromorphic differential~$\omega$ on a smooth curve $C$ and a point $p\in C$, if~$\omega$ has order $\ord_p \omega = a\neq -1$ at $p$ then for a local coordinate $z$ in a neighborhood of $p$ such that $z(p)=0$ we have, locally, $\omega = (cz^a + O(z^{a+1}))dz$ for some $c\in \mbC^*$. Then the $k=|a+1|$ roots $\zeta$ such that $\zeta^{a+1}=c^{-1}$ determine $k$ projectivized vectors $\left.\zeta\frac{\d}{\d z}\right|_p\in T_pC/\mbR_{>0}$ (if $a\geq 0$) or $\left.-\zeta\frac{\d}{\d z}\right|_p\in T_pC/\mbR_{>0}$ (if $a<-1$) which are called \emph{outgoing} or \emph{incoming prongs} of~$\omega$, respectively. The set of outgoing (resp. incoming) prongs at $p$ is denoted by $P^\mathrm{out}_p$ (resp. $P^\mathrm{in}_p$).

\medskip

Thirdly, let $A=(a_1,\ldots,a_k) \in \mbZ^k$ and $B=(b_1,\ldots,b_n) \in \mbZ^n$ with $b_j\le -2$, $1 \le j\le n$. Then a \emph{multiscale differential} of profile $(A;B)$, with $\sum_{i=1}^k a_i+\sum_{j=1}^k b_j= 2g-2$, on a stable curve~$C$ of genus $g$ with $k+n$ marked points $x_1,\ldots,x_{k+n}\in C$, with \emph{zero residues} at $x_{k+1},\ldots,x_{k+n}$ consists of:
\begin{enumerate}
\item a structure of enhanced level graph $(\Gamma_C, \ell, \kappa)$ on the dual graph $\Gamma_C$ of $C$ (where a node is said to be vertical or horizontal if the corresponding edge is);

\smallskip

\item a collection of meromorphic differentials $\omega_v$, one on each irreducible component~$C_v$ of~$C$, $v\in V(\Gamma_C)$, holomorphic and non-vanishing outside of marked points and nodes, such that the following conditions are satisfied:
\begin{itemize}
\item[(i)] $\ord_{x_i}\omega_{v(x_i)} = a_i$ for $1\leq i \leq k$, and $\ord_{x_j}\omega_{v(x_j)} = b_{j-k}$ for $k+1\leq j \leq k+n$.

\smallskip

\item[(ii)] $\res_{x_j} \omega_{v(x_j)}=0$, $k+1\leq j \leq k+n$. 

\smallskip

\item[(iii)] If $q_1\in C_{v_1}$ and $q_2 \in C_{v_2}$, $v_1,v_2\in V(\Gamma_C)$, form a node $e\in E(\Gamma_C)$, then
\begin{equation*}
\ord_{q_1}\omega_{v_1}+\ord_{q_2}\omega_{v_2}=-2.
\end{equation*}

\smallskip

\item[(iv)] If $q_1\in C_{v_1}$ and $q_2 \in C_{v_2}$, $v_1,v_2\in V(\Gamma_C)$, form a node $e\in E(\Gamma_C)$, then $\ell(v_1)\ge \ell(v_2)$ if and only if $\ord_{q_1}\omega_{v_1}\ge -1$. Together with the previous property, this implies that $\ell(v_1)=\ell(v_2)$ if and only if $\ord_{q_1}\omega_{v_1}=-1$.

\smallskip

\item[(v)] If $q_1\in C_{v_1}$ and $q_2 \in C_{v_2}$, $v_1,v_2\in V(\Gamma_C)$, form a horizontal node $e\in E(\Gamma_C)$ (i.e. $\kappa_e=0$), then
\begin{equation}\label{eq:residue condition at horizontal nodes}
\res_{q_1} \omega_{v_1}+\res_{q_2} \omega_{v_2}=0.
\end{equation}

\smallskip

\item[(vi)] For every level $-1\leq l\leq -L$ of $\Gamma_C$ and for every connected component $Y$ of $C_{(>l)}$ such that $Y$ does not contain any marked pole $x_i$, with $a_i<0$ and $1 \le i \le k$,
\begin{equation}\label{eq:residue condition at vertical nodes}
\sum_{q\in Y\cap C_{(l)}} \res_{q^-} \omega_{v(q^-)} =0,
\end{equation}
where $q^+\in Y$ and $q^-\in C_{(l)}$ form the vertical node $q \in Y\cap C_{(l)}$.
\end{itemize}

\smallskip

\item a cyclic order-reversing bijection $\sigma_q\colon P^\mathrm{out}_{q^+} \to P^\mathrm{in}_{q^-}$ for each vertical node~$q$ formed by identifying $q^+$ on the upper level with $q^-$ on the lower level, where $\kappa_q=|P^\mathrm{out}_{q^+}|=|P^\mathrm{in}_{q^-}|$.
\end{enumerate}

\medskip

Lastly, there is an action of the universal cover of the torus $\mbC^{L}\to (\mbC^*)^{L}$ on multiscale differentials with residue conditions by rescaling the differentials with strictly negative levels and rotating the prong matchings between levels accordingly, producing fractional Dehn twists. The stabilizer of this action is called the \emph{twist group} of the enhanced level graph and denoted by~$\mathrm{Tw}_\Gamma$. Two multiscale differentials with residue conditions are defined to be equivalent if they differ by the action of $T_\Gamma:=\mbC^{L}/\mathrm{Tw}_\Gamma$. By further quotienting by the action of $\mbC^*$ rescaling the differentials on all levels and leaving all prong-matchings untouched, we obtain equivalence classes of projectivized multiscale differentials with residue conditions.

\medskip

\begin{remark}\label{remark:R-space}
Using notation from~\cite[Section~4.1]{CMZ20}, condition (2)(vi) is a reformulation of the $\mathfrak{R}$-global residue condition in the particular case when $\lambda$ is the partition of $H_p$ in one-element subsets and $\lambda_{\mathfrak{R}}$ is the set of parts of $\lambda$ corresponding to the residueless poles. This condition is understood by realizing that the residues appearing in the sum \eqref{eq:residue condition at vertical nodes} correspond to periods around the ``waist'' of an undegeneration of the corresponding node (see below for the explicit form of this undegeneration). The sum of such periods has to equal the sum of residues in $Y$ by the residue theorem applied to such undegeneration.
\end{remark}

\medskip

As a special case of \cite[Proposition 4.2]{CMZ20} (corresponding to the choice of $\lambda$ and $\lambda_\mathfrak{R}$ described in Remark~\ref{remark:R-space}), we have the following result.

\begin{proposition}\cite{CMZ20}\label{proposition:moduli of multiscale}
\begin{enumerate}[ 1.]

\item Given $A=(a_1,\ldots,a_k) \in \mbZ^k$ and $B=(b_1,\ldots,b_n) \in \mbZ^n$ with $b_j\le -2$, $1 \le j\le n$, there is a proper smooth Deligne--Mumford stack $\oB_g(A;B)$ containing $B_g(A;B)$ as an open dense substack whose complement is a normal crossing divisor. $\oB_g(A;B)$ is a moduli stack for families of equivalence classes of projectivized multiscale differentials with residue conditions. Its dimension is
$$
\dim \oB_g(A;B)= 
\begin{cases}
2g-2+k,&\text{if $a_i\ge 0$ for all $1\le i\le k$},\\
2g-3+k,&\text{otherwise}.
\end{cases} 
$$

\smallskip

\item We denote the closure of the stratum parameterizing multiscale differentials whose enhanced level graph is $(\Gamma,\ell,\kappa)$ by $D_{(\Gamma,\ell,\kappa)}$ or simply by $D_\Gamma$. Then $D_\Gamma$ is a proper smooth closed substack of $\oB_g(A;B)$ of codimension
$$
\codim D_{\Gamma} = h+L,
$$
where~$h$ is the number of horizontal edges in $(\Gamma,\ell,\kappa)$ and~$L+1$ is the number of levels.
\end{enumerate}
\end{proposition}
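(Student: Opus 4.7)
The plan is to obtain this proposition as a direct specialization of \cite[Proposition~4.2]{CMZ20} to the residue datum singled out in Remark~\ref{remark:R-space}. In \cite{CMZ20} the construction is phrased in terms of an auxiliary partition $\lambda$ of the set of marked poles of a multiscale differential together with a distinguished subset $\lambda_{\mathfrak{R}}$ of its parts, encoding which global residue sums are required to vanish. The first step is to take $\lambda$ to be the partition of $\{x_{k+1},\ldots,x_{k+n}\}$ into singletons and $\lambda_{\mathfrak{R}}=\lambda$, and to verify that with this choice the notion of multiscale differential in~\cite{CMZ20} reduces exactly to items (2)(i)--(2)(vi) of the definition recalled above: the individual residue vanishing conditions at $x_{k+1},\ldots,x_{k+n}$ become (2)(ii), while the $\mathfrak{R}$-global residue condition becomes~\eqref{eq:residue condition at vertical nodes}. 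The equivalence given by the action of $T_\Gamma$ and the final projectivization by $\mbC^*$ are imported without modification.

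Once this matching is in place, the first half of part~1, i.e.\ the existence of $\oB_g(A;B)$ as a proper smooth Deligne--Mumford stack containing $B_g(A;B)$ as an open dense substack with normal crossing complement, is immediate from~\cite[Proposition~4.2]{CMZ20}. The dimension formula is then obtained by specializing the general dimension count of~\cite{CMZ20} to this setup: starting from the projectivized stratum of meromorphic differentials with the prescribed orders of zeros and poles, which has dimension $2g-2+k+n-1$, one imposes the $n$ residue-vanishing conditions and adds~$1$ precisely when the residue theorem makes one of them redundant, which happens exactly when there are no other poles, i.e.\ when every $a_i\ge 0$. This reproduces the two cases stated.

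Part~2, the codimension formula for the boundary strata $D_\Gamma$, follows from the analogous statement in~\cite[Proposition~4.2]{CMZ20}: each horizontal edge contributes codimension one via the constraint~\eqref{eq:residue condition at horizontal nodes}, and each of the $L$ level transitions contributes another codimension one via scaling in the corresponding direction inside $T_\Gamma$. The step I expect to require the most care is checking that the $\mathfrak{R}$-global residue condition~\eqref{eq:residue condition at vertical nodes} is correctly accounted for along the boundary under our specific choice of $\lambda_{\mathfrak{R}}$: this is exactly where the interpretation of the sums in~\eqref{eq:residue condition at vertical nodes} as periods around degenerating waists, sketched in Remark~\ref{remark:R-space}, is invoked, and where the $+1$ correction appearing in the open-stratum dimension ultimately originates.
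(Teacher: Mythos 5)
Your proposal takes essentially the same route as the paper: the paper offers no independent proof but states the result as the special case of \cite[Proposition~4.2]{CMZ20} for precisely the residue datum of Remark~\ref{remark:R-space} (there $\lambda$ is the partition of \emph{all} marked poles into singletons and $\lambda_{\mathfrak{R}}$ consists of the parts corresponding to the residueless poles, which imposes the same conditions as your choice), and your dimension and codimension heuristics are consistent with the cited counts. Nothing further is needed.
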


\medskip

\begin{remark}
Notice that the multiscale differentials appearing at level $l$ in the generic boundary stratum of $\oB_g(A;B)$ are of a new type, because of the global residue condition (2)(vi) not only is more general than just requiring the vanishing of each residue at a subset of the marked poles, but also involves poles on different connected components of the curve $C_{(l)}$. This is the reason why, in \cite{CMZ20}, the authors consider moduli stacks of more general projectivized multiscale differentials with residue conditions, where the underlying stable curve can be disconnected and the residue condition constrains sums of residues at fixed disjoint subsets of poles. With their choice, the moduli spaces involved in the boundary strata of another moduli space are all of the same type.

\medskip

This problem with the moduli spaces $\oB_g(A;B)$ we introduced is less severe whenever one is only interested in boundary strata where the underlying stable curve has only separating nodes (i.e. it is of compact type). Indeed, in that case, each sum in condition (2)(vi) can only involve at most one pole for each of the $N_l$ connected component of the curve $C_{(l)}$  at level $l$, and the effect of all the sums with more than one summand is simply to reduce the $(\mbC^*)^{N_l-1}$-symmetry consisting in rescaling the differentials on each connected component by relative multiplicative constants.

\medskip

This happens, for instance, if one studies genus $0$ multiscale differentials (as we do in this paper) or if one is interested in the intersection of the boundary strata with the pull-back of the top Chern class $\lambda_g$ of the Hodge bundle from $\oM_{g,k+n}$ (since $\lambda_g$ is well known to vanish on the locus of curves of non-compact type) or, more in general, in the context of partial cohomological field theories.
\end{remark}

\medskip

\subsection{Proof of Lemma \ref{lemma:WDVV}} 

For $n\geq 0$ and for integers $a,d\ge 0$, $e,f>0$, $c,c_1\ldots,c_n\ge 2$, consider the closed substack of meoromorphic differentials 
$$
\oH_0(a,d,-e,-f;-c,-c_1,\ldots,-c_n)\subset \oM_{0,5+n}.
$$
We will denote $(-c_1,\ldots,-c_n)=\colon -C$ and more in general $(-c_{i_1},\ldots,-c_{i_r})=\colon -C_{I}$ for any $I=\{i_1,\ldots,i_r\}\subset \{1,\ldots,n\}$, so that $\oH_0(a,d,-e,-f;-c,-c_1,\ldots,-c_n)=\oH_0(a,d,-e,-f;-c,-C)$.  Consider moreover the moduli stack $\oB_0(a,d,-e,-f;-c,-C)$ of projectivized multiscale differentials described above, with its natural projection
\begin{equation}\label{eq:projection}
p\colon\oB_0(a,d,-e,-f;-c,-C)\to \oH_0(a,d,-e,-f;-c,-C) \subset \oM_{0,5+n},
\end{equation}
which restricts to an isomorphism $p\colon B_0(a,d,-e,-f;-c,-C)\to \cH_0(a,d,-e,-f;-c,-C)$, so that $[\oH_0(a,d,-e,-f;-c,-C)]=p_*[\oB_0(a,d,-e,-f;-c,-C)]$. By Proposition \ref{proposition:moduli of multiscale}, we have $\dim \oH_0(a,d,-e,-f;-c,-C)=\dim \oB_0(a,d,-e,-f;-c,-C)=1$.

\medskip

We want to intersect $[\oH_0(a,d,-e,-f;-c,-C)]$ with the pull-back to $H^2(\oM_{0,5+n})$ of the WDVV relation in $H^2(\oM_{0,4})$:
$$
\tikz[baseline=-1mm]{\draw (A)--(B);\legm{A}{120}{a};\legm{A}{-120}{-c};\legm{B}{-60}{-f};\legm{B}{60}{-e};\gg{0}{A};\gg{0}{B};}=\tikz[baseline=-1mm]{\draw (A)--(B);\legm{A}{120}{a};\legm{A}{-120}{-f};\legm{B}{-60}{-c};\legm{B}{60}{-e};\gg{0}{A};\gg{0}{B};} \in H^2(\oM_{0,4}),
$$
via the map that forgets the $n+1$ marked points with multiplicities $d$ and $-C$. Notice here that we are slightly abusing the usual stable graph notation indicating the classes of boundary strata in $\oM_{0,4}$, since we are using the zero and pole multiplicities to label the marked points of curves that, in general, do not belong to $\oH_0(a,d,-e,-f)$ (which can be empty because $a+d-e-f$ is not necessarily equal to $-2$). As long as we assign a unique letter representing the order of a zero or a pole to each marked point, the notation works for boundary strata in any $\oM_{0,k+n}$ and is more easily comparable with the corresponding enhanced level graph notation we will use for strata of $\oB_g(A;B)$, but the reader should be mindful that curves in these boundary strata of $\oM_{0,4}$ or $\oM_{0,5+n}$ do not carry, in general, any meromorphic differential with the indicated multiplicities.

\medskip

When pulled back to $H^2(\oM_{0,5+n})$ the above relation becomes
\begin{equation}\label{eq:WDVV(0,5)}
\tikz[baseline=-1mm]{\draw (A)--(B);\legm{A}{120}{a};\legm{A}{-180}{d};\legm{A}{-120}{-c};\legm{B}{-60}{-f};\legm{B}{60}{-e};\gg{0}{A};\gg{0}{B};}+ \tikz[baseline=-1mm]{\draw (A)--(B);\legm{A}{120}{a};\legm{A}{-120}{-c};\legm{B}{-60}{-f};\legm{B}{0}{d};\legm{B}{60}{-e};\gg{0}{A};\gg{0}{B};}=\tikz[baseline=-1mm]{\draw (A)--(B);\legm{A}{120}{a};\legm{A}{-180}{d};\legm{A}{-120}{-f};\legm{B}{-60}{-c};\legm{B}{60}{-e};\gg{0}{A};\gg{0}{B};}+\tikz[baseline=-1mm]{\draw (A)--(B);\legm{A}{120}{a};\legm{A}{-120}{-f};\legm{B}{-60}{-c};\legm{B}{0}{d};\legm{B}{60}{-e};\gg{0}{A};\gg{0}{B};} \in H^2(\oM_{0,5+n}),
\end{equation}
where we have omitted the $n$ marked legs with multiplicities $-C$, i.e. each term in the above formula represents a sum over all possible stable ways of distributing these $n$ legs between the two vertices of the graph.

\medskip

The preimage through the map \eqref{eq:projection} of each of the boundary strata intervening in equation~\eqref{eq:WDVV(0,5)} is a normal crossing divisor in $\oB_0(a,d,-e,-f;-c,-C)$ and, hence, is zero dimensional. By what explained in Section \ref{section:multiscale}, each such divisor is a union of strata $D_\Gamma$ with $\Gamma$ being either a one level connected graph with two vertices and one horizontal edge, or a two level connected graph with no horizontal edges. Strata of the second type can only be zero-dimensional if either there is one vertex at level $0$ and one at level $-1$, connected by a vertical edge, or there is one vertex at level $0$ connected by two vertical edges to exactly two vertices at level $-1$, with the $\mbC^*$-symmetry rescaling the differential on one component at level $-1$ with respect to the other being fixed by the global residue condition (2)(vi). This last situation can only happen if the component at level $0$ contains neither of the points marked with~$-e$ and~$-f$. Strata $D_\Gamma$ with $\Gamma$ being a two level graph with more than one connected component at level~$0$ or more than two connected components at level $-1$ are always empty: if they were not, chosen a differential in the stratum, we could produce at least a one-dimensional space of them by rescaling by a nonzero complex number the differential on one connected component with respect to the others on the same level (still satisfying the global residue condition (2)(vi) at level $-1$), which contradicts the fact that these boundary strata should be zero-dimensional. Moreover any of the three type of strata can be zero dimensional only when the differential on each component  of the stable curve exhibits no moduli, i.e. when it has two zeros and one pole with unconstrained residue or one zero and two poles with unconstrained residue (and, of course, any number of residueless poles).

\medskip

We will describe boundary strata of $\oB_0(a,d,-e,-f;-c,-C)$ using enhanced level graphs. In particular half-edges (not including legs) pointing upwards with respect to their vertex represent poles of order at least $2$, half-edges pointing downwards represent either zeros or points with multiplicity $0$ and horizontal half-edges represent simple poles for the meromorphic differential at the vertex. Somewhat similarly, legs pointing upwards represent poles (including simple poles) and legs pointing downwards represent zeros or points with multiplicity $0$. Zigzagged half-edges and legs always point up and represent residueless poles.

\medskip

Let us list all potentially non-empty components $D_\Gamma \subset  \oB_0(a,d,-e,-f;-c,-C)$ in the preimage of the stratum appearing in each term of equation \eqref{eq:WDVV(0,5)} by the corresponding enhanced level graph~$\Gamma$:
\begin{equation}\label{eq:terms 12}
\text{first term:}\quad\tikz[baseline=0]{\draw (A0)--(B1);\legm{A0}{120}{-e};\legm{A0}{60}{-f};\legm{B1}{-60}{d};\legm{B1}{-120}{a};\legmrl{B1}{150}{-c};\gg{0}{A0};\gg{0}{B1};\lab{A0}{-50}{5.6mm}{b-2};\lab{B1}{60}{4.8mm}{-b}},
\qquad\qquad\text{second term:}\quad
\tikz[baseline=0]{\draw (A0)--(0,0);\draw[decorate, decoration={zigzag, segment length=1.3mm,amplitude=0.5mm}] (0,0)--(B1);\legm{B1}{130}{-e};\legm{B1}{160}{-f};\legm{B1}{-90}{d};\legm{A0}{-150}{a};\legmrl{A0}{90}{-c};\gg{0}{A0};\gg{0}{B1};\lab{A0}{-50}{5.6mm}{b-2};\lab{B1}{60}{4.8mm}{-b}},
\end{equation}
where $b\ge 2$; the preimage of the third term is empty; and the fourth term gives
\begin{equation}\label{eq:terms 345}
\tikz[baseline=0]{\draw (A0)--(B1);\legm{A0}{120}{-e};\legm{B1}{160}{-f};\legm{B1}{-90}{a};\legm{A0}{-150}{d};\legmrl{A0}{60}{-c};\gg{0}{A0};\gg{0}{B1};\lab{A0}{-50}{5.6mm}{b-2};\lab{B1}{60}{4.8mm}{-b}},\qquad\qquad 
\tikz[baseline=0]{\draw (A0)--(B1);\legmrl{B1}{130}{-c};\legm{B1}{160}{-e};\legm{B1}{-90}{d};\legm{A0}{-150}{a};\legm{A0}{90}{-f};\gg{0}{A0};\gg{0}{B1};\lab{A0}{-50}{5.6mm}{b-2};\lab{B1}{60}{4.8mm}{-b}},\qquad\qquad \tikz[baseline=0]{\draw (A)--(C);\legm{A}{-90}{a};\legm{A}{90}{-f};\legmrl{C}{90}{-c};\legm{C}{-90}{d};\legm{C}{50}{-e};\gg{0}{A};\gg{0}{C};\lab{A}{20}{4.8mm}{-1};\lab{C}{160}{4.8mm}{-1}},
\end{equation}
where $b\ge 2$, together with
\begin{equation}
\tikz[baseline=0]{\draw (B1a)--(A0)--(C1b);\legmrl{A0}{90}{-c};\legm{C1b}{60}{-e};\legm{C1b}{-90}{d};\legm{B1a}{-90}{a};\legm{B1a}{120}{-f};\gg{0}{A0};\gg{0}{B1a};\gg{0}{C1b};\lab{C1b}{150}{4.9mm}{-b_2};\lab{B1a}{30}{4.9mm}{-b_1};\lab{A0}{-150}{6.9mm}{b_1-2};\lab{A0}{-30}{6.6mm}{b_2-2}},\qquad\qquad
\tikz[baseline=0]{\draw (B1a)--(A0)--(C1b);\legmrl{C1b}{60}{-c};\legm{C1b}{30}{-e};\legm{C1b}{-90}{d};\legm{B1a}{-90}{a};\legm{B1a}{120}{-f};\gg{0}{A0};\gg{0}{B1a};\gg{0}{C1b};\lab{C1b}{150}{4.9mm}{-b_2};\lab{B1a}{30}{4.9mm}{-b_1};\lab{A0}{-150}{6.9mm}{b_1-2};\lab{A0}{-30}{6.6mm}{b_2-2}},\label{eq:terms 67}
\end{equation}
where $b_1,b_2\ge 2$.

\medskip 

Notice in particular how the last two graphs above represent zero dimensional strata thanks to the fact, mentioned above, that the global residue condition (2)(vi) prescribes that the sum of the residues of the two differentials on the lower level at the poles with multiplicities $-b_1$ and $-b_2$ must vanish.

\medskip

To deduce from these considerations the intersection number of $\oH_0(a,d,-e,-f;-c,-C)$ with each boundary stratum appearing in equation  \eqref{eq:WDVV(0,5)} we need to study the multiplicity of each of these intersections. These can be computed by studying the local model for the undegeneration of a multiscale differential at a nodal singularity, see \cite[Section 4]{BCGGM18} and \cite[Section 4]{BCGGM19}.

\medskip

For all strata in~\eqref{eq:terms 12} and~\eqref{eq:terms 345} the local model and the argument are similar. Let us explain it for the first stratum in~\eqref{eq:terms 345}. Fix $I,J\subset \{1,\ldots,n\}$ with $I\sqcup J = \{1,\ldots,n\}$ and choose $p_1 \in \oH_0(d,b-2,-e;-c,-C_I)\subset \oM_{0,4+|I|}$ and $p_2 \in \oH_0(a,-f,-b;-C_J)\subset \oM_{0,3+|J|}$. Let $\sigma\colon\oM_{0,4+|I|} \times \oM_{0,3+|J|} \to \oM_{0,5+n}$ be the gluing map at the marked points carrying the labels~$b-2$ and $-b$. Let $p=\sigma(p_1,p_2)$ and denote $S:=\oH_0(a,d,-e,-f;-c,-C)$ for brevity.

\medskip

Choose local coordinates $V_1$ on $\oM_{0,4+|I|}$ and $V_2$ on $\oM_{0,3+|J|}$ so that $p_1=0\in V_1$  and $p_2=0\in V_2$. Denote by $\Delta_r \subset \mbC$ the disk of radius $r$. We claim that we can choose local coordinates $V_1\times V_2 \times \Delta_r$ on $\oM_{0,5+n}$ so that $S=\{0\}\times\{0\}\times \Delta_r$ and the image of $\sigma$ is $V_1\times V_2 \times \{0\}$. Then the transversality of the intersection is obvious. Let us describe how to choose these local coordinates.

\medskip

Let $\kappa := b-1$. The curves $C_1$ and $C_2$ corresponding, respectively, to the points $p_1$ and~$p_2$, carry meromorphic differentials $\alpha$ and $\beta$, both unique up to a multiplicative constant. For the meromorphic differential $\alpha$, we fix this constant arbitrarily. On the other hand, note that $\beta$ has a nonzero residue at the marked point labeled by $-b$. So for $\beta$, we fix the multiplicative constant by requiring that this residue is $1$. In a neighborhood of the marked points with labels $b-2$ and $-b$, respectively, there is a local coordinate $z$  on $C_1$ and $w$ on $C_2$ such that $\alpha = z^\kappa \frac{dz}{z}$ and $\beta = (-w^{-\kappa} + 1) \frac{dw}{w}$. Additionally to that, in a neighborhood of the marked point with label~$d$, there is a local coordinate $z_0$ on $C_1$ such that $\alpha = z_0^d dz_0$. We extend such local coordinates to curves in $V_1$ and $V_2$, possibly after shrinking $V_1$ and $V_2$, in an arbitrary way. So we can assume that the differentials $\alpha$ and $\beta$ are locally defined in these coordinates on all curves in $V_1$ and $V_2$.

\medskip

Now, given a curve $C_1$ in $V_1$, a curve $C_2$ in $V_2$, and a complex number $\epsilon\in \Delta_r$, there is a unique meromorphic differential on $C_1$ having exactly two poles at the marked points labeled by $-e$ and $b-2$, both of order $1$ and with residues $\epsilon^\kappa$ and $-\epsilon^\kappa$, respectively. Denote this differential by~$\eta$. If $r>0$ is small enough, then for any $\epsilon\in \Delta_r$ we can perturb the local coordinate $z$ (resp. $z_0$) on an annulus around the marked point labeled by $b-2$ (resp. $d$) in such a way that $\alpha+\eta=(z^\kappa-\epsilon^\kappa)\frac{dz}{z}$ (resp. $z_0^d dz_0$) on this annulus. Let us now remove the disk in $C_1$ bounded by the internal circle of the annulus around the marked point labeled by $b-2$, remove a neighborhood of the marked point $w=0$, and glue in the ``waist'' $zw=\epsilon$. Moreover, let us remove the disk bounded by the internal circle of the annulus around the marked point labeled by $d$ and glue it back in such a way that the differential $\alpha+\eta$ is equal to $z_0^d dz_0$ on the whole disk around the point $z_0=0$. When~$C_1$ and~$C_2$ correspond to $0\in V_1$ and $0\in V_2$, and $\epsilon\in \Delta_r \setminus \{0\}$, the curve thus obtained carries the differential that is glued from the differential~$\alpha+\eta$ on $C_1$ and the differential $\epsilon^\kappa \beta$ on $C_2$, which, around the ``waist'' $zw=\epsilon$, looks as follows:
$$
\epsilon^\kappa(-w^{-\kappa}+1)\frac{dw}{w}=(z^{\kappa}-\epsilon^\kappa) \frac{dz}{z}.
$$
Notice how the complex number~$\epsilon^\kappa$ corresponds to the period around the ``waist'' $zw=\epsilon$.

\medskip

We have thus shown that the intersection of $\oH_0(a,d,-e,-f;-c,-C)$ with the boundary strata appearing in equation \eqref{eq:WDVV(0,5)} at points in the image through $p$ of the zero-dimensional strata~\eqref{eq:terms 12} and~\eqref{eq:terms 345} is transversal, so its multiplicity is $1$.

\medskip

The computation of the multiplicity at points in the image through $p$ of the zero-dimensional strata \eqref{eq:terms 67} is similar. Let us focus on the first one. Fix $I,J,K \subset \{1,\ldots,n\}$ with $I\sqcup J\sqcup K = \{1,\ldots,n\}$ and choose $p_0 \in \oH_0(b_1-2,b_2-2;-c,-C_I)\subset \oM_{0,3+|I|}$, $p_1 \in \oH_0(a,-f,-b_1;-C_J)\subset \oM_{0,3+|J|}$, and $p_2 \in \oH_0(d,-e,-b_2;-C_K)\subset \oM_{0,3+|K|}$. Let $\sigma\colon\oM_{0,3+|I|}\times\oM_{0,3+|J|}\times\oM_{0,3+|K|} \to \oM_{0,5+n}$ be the gluing map at the marked points carrying the labels~$b_1-2$ and~$-b_1$ and the labels~$b_2-2$ and~$-b_2$, respectively. Let $p = \sigma(p_0,p_1,p_2)$ and denote $S:=\oH_0(a,d,-e,-f;-c,-C)$ for brevity.

\medskip

Choose local coordinates $V_0$ on $\oM_{0,3+|I|}$, $V_1$ on $\oM_{0,3+|J|}$, and $V_2$ on $\oM_{0,3+|K|}$ so that $p_0=0\in V_0$, $p_1=0\in V_1$, and $p_2=0\in V_2$. We will show that we can choose local coordinates $V_0\times V_1\times V_2 \times \Delta_r^2$ on $\oM_{0,5+n}$ so that $S$ is a curve in $\{0\}\times\{0\}\times\{0\}\times \Delta_r^2$ whose equation we will write down explicitly. Moreover, the image of $\sigma$ will be $V_0\times V_1 \times V_2 \times \{0\}\times \{0\}$, while $V_0\times V_1 \times V_2 \times \{0\} \times \Delta_r$ and $V_0\times V_1 \times V_2 \times \Delta_r \times \{0\}$ are the codimension $1$ boundary divisors containing undegenerations of only one of the two nodes, i.e. the images of the maps  $\sigma_1\colon\oM_{0,3+|J|}\times\oM_{0,4+|I|+|K|}\to \oM_{0,5+n}$ and $\sigma_2\colon\oM_{0,4+|I|+|J|}\times\oM_{0,3+|K|}\to \oM_{0,5+n}$, respectively.

\medskip

Let $\kappa_1 := b_1-1$ and $\kappa_2 := b_2-1$. The curves $C_0$, $C_1$, and $C_2$ corresponding, respectively, to the points $p_0$, $p_1$, and $p_2$, carry meromorphic differentials $\alpha$, $\beta_1$, and $\beta_2$, all three unique up to multiplicative constants. For the meromorphic differential $\alpha$, we fix this constant arbitrarily. On the other hand, note that $\beta_1$ (resp. $\beta_2$) has a nonzero residue at the marked point labeled by $-b_1$ (resp. $-b_2$). So for $\beta_1$ (resp. $\beta_2$), we fix the multiplicative constant by requiring that this residue is $1$ (resp. $-1$). In a neighborhood of the marked point with label $b_1-2$ there is a local coordinate $z_1$ on $C_0$ such that $\alpha = z_1^{\kappa_1} \frac{dz_1}{z_1}$, and in a neighborhood of the marked point with label~$b_2-2$ there is a local coordinate $z_2$ on $C_0$ such that $\alpha = z_2^{\kappa_2} \frac{dz_2}{z_2}$. In a neighborhood of the marked point with label $-b_1$, there is a local coordinate $w_1$ on $C_1$ such that $\beta_1 = (-w_1^{-\kappa_1} + 1) \frac{dw_1}{w_1}$. In a neighborhood of the marked point with label $-b_2$, there is a local coordinate $w_2$ on $C_2$ such that $\beta_2 = (-w_2^{-\kappa_2}-1)\frac{dw_2}{w_2}$. We extend such local coordinates to curves in $V_0$, $V_1$, and $V_2$ in an arbitrary way. So we can assume that the differentials $\alpha$, $\beta_1$, and $\beta_2$ are locally defined in these coordinates on all curves in $V_0$, $V_1$, and $V_2$.

\medskip

Now, given a curve $C_0$ in $V_0$, a curve $C_1$ in $V_1$, a curve $C_2$ in $V_2$, and a complex number $\epsilon$, there is a unique meromorphic differential $\eta_\epsilon$ on $C_0$ having exactly two poles at the marked points labeled by $b_1-2$ and $b_2-2$, both of order $1$ and with residues $-\epsilon$ and $\epsilon$, respectively. If $r>0$ is small enough, then for any $(\epsilon_1,\epsilon_2)\in\Delta_r^2$ we can perturb the local coordinate $z_1$ (resp.~$z_2$) on an annulus around the marked point labeled by $b_1-2$ (resp. $b_2-2$) in such a way that $\alpha+\eta_{\epsilon_1^{\kappa_1}}=(z_1^{\kappa_1}-\epsilon_1^{\kappa_1})\frac{dz_1}{z_1}$ (resp. $\alpha+\eta_{\epsilon_2^{\kappa_2}}=(z_2^{\kappa_2}+\epsilon_2^{\kappa_2})\frac{dz_2}{z_2}$) on this annulus. Let us now remove the disk in $C_0$ bounded by the internal circle of the annulus around the marked point labeled by $b_1-2$, remove a neighborhood of the marked point $w_1=0$, and glue in the ``waist'' $z_1w_1=\epsilon_1$. Let us also remove the disk in $C_0$ bounded by the internal circle of the annulus around the marked point labeled by $b_2-2$, remove a neighborhood of the marked point $w_2=0$, and glue in the ``waist'' $z_2w_2=\epsilon_2$. When $C_0$, $C_1$, and $C_2$ correspond to $0\in V_0$, $0\in V_1$, and $0\in V_2$, respectively, and also $\epsilon_1^{\kappa_1}=\epsilon_2^{\kappa_2}$, the curve thus obtained carries the differential that is glued from the differential $\alpha+\eta_{\epsilon_1^{\kappa_1}}$ on $C_0$ and the differentials $\epsilon_1^{\kappa_1}\beta_1$ and $\epsilon_2^{\kappa_2}\beta_2$ on $C_1$ and $C_2$, and which, around the ``waists'' $z_1w_1=\epsilon_1$ and $z_2w_2=\epsilon_2$, looks as follows:
\begin{gather*}
\epsilon_1^{\kappa_1}(-w_1^{-\kappa_1}+1)\frac{dw_1}{w_1}=(z_1^{\kappa_1}-\epsilon_1^{\kappa_1})\frac{dz_1}{z_1},\qquad \epsilon_2^{\kappa_2}(-w_2^{-\kappa_2}-1)\frac{dw_2}{w_2}=(z_2^{\kappa_2}+\epsilon_2^{\kappa_2})\frac{dz_2}{z_2}.
\end{gather*}
We see that $S$ is the curve in $\{0\}\times\{0\}\times\{0\}\times \Delta_r^2$ given by the equation $\epsilon_1^{\kappa_1}=\epsilon_2^{\kappa_2}$, which intersects the image of $\sigma_1$ with multiplicity $\kappa_2=b_2-1$ and the image of $\sigma_2$ with multiplicity $\kappa_1=b_1-1$.

\medskip

Notice that, for the case in question, we are interested in the intersection with the image of~$\sigma_1$ only, since the image of $\sigma_2$ is not a component of the stratum represented by the fourth term of equation \eqref{eq:WDVV(0,5)}. Indeed, contracting the edge labeled by $b_1-2$ and $-b_1$ in the first graph in \eqref{eq:terms 67}, we don't obtain the stable graph in the fourth term of equation $\eqref{eq:WDVV(0,5)}$. On the other hand, for the second graph in \eqref{eq:terms 67}, both the image of $\sigma_1$ and that of $\sigma_2$ are components of the stratum represented by the fourth term of equation \eqref{eq:WDVV(0,5)}, so the total multiplicity is the sum of the multiplicities of each component.

\medskip

We have thus shown that the intersection of $\oH_0(a,d,-e,-f;-c,-C)$ with the boundary strata appearing in equation \eqref{eq:WDVV(0,5)} at points in the image through $p$ of the zero-dimensional strata \eqref{eq:terms 67} has multiplcity $b_2-1$ and $b_1+b_2-2$, respectively.

\medskip

Let us slightly abuse the level graph notation above by having an enhanced level graph $\Gamma$ denote, instead, the pushforward to $H_*(\oM_{0,5+n})$, via the appropriate gluing map at the nodes, of $\prod_{v \in V(\Gamma)}[\oH(v)]$, where $V(\Gamma)$ is the set of vertices of $\Gamma$ and $\oH(v):= \oH_0(A(v);B(v))$, with~$A(v)$ and $B(v)$ being the labels at the straight and zigzagged half-edges or legs of $v$. The above considerations prove that the intersection of $[\oH_0(a,d,-e,-f;-c,-C)]$ with equation \eqref{eq:WDVV(0,5)} is equivalent to
\begin{align*}
\sum_{b\ge 2} \tikz[baseline=0]{\draw (A0)--(B1);\legm{A0}{120}{-e};\legm{A0}{60}{-f};\legm{B1}{-60}{d};\legm{B1}{-120}{a};\legmrl{B1}{150}{-c};\gg{0}{A0};\gg{0}{B1};\lab{A0}{-50}{5.6mm}{b-2};\lab{B1}{60}{4.8mm}{-b}} +&
\sum_{b\ge 2} \tikz[baseline=0]{\draw (A0)--(0,0);\draw[decorate, decoration={zigzag, segment length=1.3mm,amplitude=0.5mm}] (0,0)--(B1);\legm{B1}{130}{-e};\legm{B1}{160}{-f};\legm{B1}{-90}{d};\legm{A0}{-150}{a};\legmrl{A0}{90}{-c};\gg{0}{A0};\gg{0}{B1};\lab{A0}{-50}{5.6mm}{b-2};\lab{B1}{60}{4.8mm}{-b}}=
\sum_{b\ge 2} \tikz[baseline=0]{\draw (A0)--(B1);\legm{A0}{120}{-e};\legm{B1}{160}{-f};\legm{B1}{-90}{a};\legm{A0}{-150}{d};\legmrl{A0}{60}{-c};\gg{0}{A0};\gg{0}{B1};\lab{A0}{-50}{5.6mm}{b-2};\lab{B1}{60}{4.8mm}{-b}}+
\tikz[baseline=0]{\draw (A)--(C);\legm{A}{-90}{a};\legm{A}{90}{-f};\legmrl{C}{90}{-c};\legm{C}{-90}{d};\legm{C}{50}{-e};\gg{0}{A};\gg{0}{C};\lab{A}{20}{4.8mm}{-1};\lab{C}{160}{4.8mm}{-1}}+
\sum_{b\ge 2} \tikz[baseline=0]{\draw (A0)--(B1);\legmrl{B1}{130}{-c};\legm{B1}{160}{-e};\legm{B1}{-90}{d};\legm{A0}{-150}{a};\legm{A0}{90}{-f};\gg{0}{A0};\gg{0}{B1};\lab{A0}{-50}{5.6mm}{b-2};\lab{B1}{60}{4.8mm}{-b}}\\
&+\sum_{b_1,b_2\ge 2} (b_2-1) \tikz[baseline=0]{\draw (B1a)--(A0)--(C1b);\legmrl{A0}{90}{-c};\legm{C1b}{60}{-e};\legm{C1b}{-90}{d};\legm{B1a}{-90}{a};\legm{B1a}{120}{-f};\gg{0}{A0};\gg{0}{B1a};\gg{0}{C1b};\lab{C1b}{150}{4.9mm}{-b_2};\lab{B1a}{30}{4.9mm}{-b_1};\lab{A0}{-150}{6.9mm}{b_1-2};\lab{A0}{-30}{6.6mm}{b_2-2}}+
\sum_{b_,b_2\ge 2} (b_1+b_2-2) \tikz[baseline=0]{\draw (B1a)--(A0)--(C1b);\legmrl{C1b}{60}{-c};\legm{C1b}{30}{-e};\legm{C1b}{-90}{d};\legm{B1a}{-90}{a};\legm{B1a}{120}{-f};\gg{0}{A0};\gg{0}{B1a};\gg{0}{C1b};\lab{C1b}{150}{4.9mm}{-b_2};\lab{B1a}{30}{4.9mm}{-b_1};\lab{A0}{-150}{6.9mm}{b_1-2};\lab{A0}{-30}{6.6mm}{b_2-2}}.
\end{align*}
Notice that, all of the involved spaces of meromorphic differentials being zero-dimensional, the above equation is an equality of numbers. It is easy to see that, expressing this equality in terms of the two generating series $\theta^a_{b,c}(t_2,t_3,\ldots)$ and $P^{a,b}(t_2,t_3,\ldots)$, we obtain exactly the seven terms equation of Lemma~\ref{lemma:WDVV}.

\medskip

\end{document}